\newtheorem{Thm}{Theorem}[subsection]
\newtheorem{Conj}[Thm]{Conjecture}
\newtheorem{Prop}[Thm]{Proposition}
\newtheorem{Def}[Thm]{Definition}
\newtheorem{Def/Thm}[Thm]{Definition/Theorem}
\newtheorem{Cor}[Thm]{Corollary}
\newtheorem{Lemma}[Thm]{Lemma}
\theoremstyle{remark}
\newtheorem{Rmk}[Thm]{Remark}
\newtheorem{EG}[Thm]{Example}
\newcommand{\ti}{\times}
\newcommand{\ra}{\rightarrow}
\DeclareMathOperator{\Spec}{Spec}
\newcommand{\X}{{\mathfrak X}}
\newcommand{\cW}{{\mathcal W}}
\newcommand{\cC}{{\mathcal C}}
\newcommand{\bk}{{\mathbf k}}
\newcommand{\A}{{\mathbb A}}
\newcommand{\PP}{{\mathbb P}}
\newcommand{\ZZ}{{\mathbb Z}}
\begin{document}

\title{A compactification of the space of maps from curves}

\author{Bumsig Kim}
\address{School of Mathematics, Korea Institute for Advanced Study,
85 Heogiro, Dongdaemun-gu, Seoul, 130-722, Rep.\ of Korea}
\email{bumsig@kias.re.kr}

\author{Andrew Kresch}
\address{Institut f\"ur Mathematik,
Universit\"at Z\"urich, Winterthurerstrasse 190, CH-8057 Z\"urich,
Switzerland}
\email{andrew.kresch@math.uzh.ch}

\author{Yong-Geun Oh}
\address{Mathematics Department,
480 Lincoln Dr., Madison WI 53706-1388, USA, and \\
Department of Mathematics, POSTECH,
Pohang, 790-784, Rep.\ of Korea}
\email{oh@math.wisc.edu}

\begin{abstract}
We construct a new compactification of the moduli space of maps
from pointed nonsingular projective stable curves to a
nonsingular  projective variety with prescribed
ramification indices at the points. It is shown to be a proper
Deligne-Mumford stack equipped with a natural virtual fundamental class.
\end{abstract}

\subjclass[2000]{Primary 14N35; Secondary 14D20.}

\maketitle

%\tableofcontents

\section{Introduction}
\subsection{Overview and motivation}
Let $\mu = (\mu _1,\ldots,\mu _n)$, $\mu _i\in \ZZ _{\ge 1}$. In this
paper, we construct a new compactification
$\overline{\mathfrak{U}}_{g, \mu }(X,\beta )$ of the space of all
maps $f$ from genus $g$, $n$-pointed nonsingular
projective curves $(C,p_1,\ldots,p_n)$ to a nonsingular
projective variety $X$,
representing class $\beta \in A_1(X)/\sim ^{\mathrm{alg}}$
such that:

\begin{itemize}

\item $f$ is unramified everywhere except possibly at $p_1$, $\ldots$, $p_n$,
where the ramification indices of $f$ are $\mu_1$, $\ldots$, $\mu _n$, respectively.

\item $f(p_i)$, $i=1$, $\ldots$, $n$, are pairwise distinct.

\end{itemize}
Here we follow the convention that the ramification index of $f$ at a point $p\in C$ is $1$
if $f$ is unramified at $p$.

The boundary of $  \overline{\mathfrak{U}}_{g, \mu }(X,\beta )   $    
consists of suitable maps, still keeping the
conditions in a sense, from $n$-pointed prestable genus $g$ curves
to Fulton-MacPherson degeneration spaces of $X$.  A
Fulton-MacPherson degeneration space  is, by definition, a fiber of the
``universal'' family $X[m]^+\ra X[m]$ of the Fulton-MacPherson
configuration space $X[m]$ of $m$ distinct labeled points in $X$, for
some integer $m$ (Definition \ref{FMdeg}). The elements in the
moduli space are called stable ramified maps (Definition
\ref{DefPmap}). They can be considered as stable log-unramified
maps. Stable ramified maps are always finite maps.
We show that the moduli space
$\overline{\mathfrak{U}}_{g, \mu }(X,\beta )$ is a proper
Deligne-Mumford stack over an algebraically closed base field $\bk$ of
characteristic zero, carrying a natural virtual
fundamental class (Theorem \ref{MainThm} and Section
\ref{Deformation}). Therefore, we will be able to define ramified Gromov-Witten
invariants (Section \ref{Deformation}). 
%The construction given
%here can be generalized to, for example, the relative case, that
%is, one can combine our construction with J. Li's construction of
%the moduli space of stable relative maps; we hope that the details will appear
%elsewhere. For this purpose, a generalization of Fulton-MacPherson
%configuration spaces is given in \cite{KS}. 
This compactification   %, together with its generalization to the relative case, 
incorporates four known spaces in
algebraic geometry: Fulton-MacPherson's configuration space,
Kontsevich's stable map compactification, Harris-Mumford's admissible cover compactification,
and Li's compactification of the moduli of stable relative maps. All of
the four will play key roles in our construction.

%This paper is motivated by the third author's talks on joint
%work \cite{FO} with Fukaya (the talks were given, for instance, at
%KIAS in 2004 and at Fields Institute in 2005).
%This paper is motivated by the third author's joint
%work in progress with Fukaya \cite{FO}.
%This work, in the realm of symplectic geometry, is aimed at supplying a
%compactification of the moduli space of pseudoholomorphic maps from
%pointed Riemann surfaces to an almost K\"ahler manifold with prescribed
%ramification indices at markings.
This paper is motivated by the third author's
discussions with Fukaya on a similar compactification in the
context of  almost K\"ahler (or symplectic) geometry.
A further motivation is a conjectural link with BPS counts
due to Pandharipande (Section \ref{rahul}).

The main advantage of the algebro-geometric method exploited in this
paper is the systematic use of the wonderful property of the
universal families of Fulton-MacPherson spaces \cite{FM,
L.Li}, the notion of (log) admissible maps \cite{HM, J.Li, Mo, We},
the deformation theory for such maps \cite{J.Li2, Olsson3}, as well
as the now-standard tools in Gromov-Witten theory in
the algebro-geometric category \cite{BF, FP, KM, LT}.
In \cite{Kim}, a variant of this new compactification
is shown to be a smooth irreducible proper Deligne-Mumford stack,
compactifying  the space of maps from elliptic curves to a projective space.

\subsection{Conventions}
Let $\bk$ be a base field, which is algebraically closed and of
characteristic zero.
Every space will be a Noetherian $\bk$-scheme unless otherwise specified.
We often use $W|_T$ or $h^*W$ to denote the fiber product $W\ti _ST$ of algebraic spaces,
where $h:T\ra S$ is the map in the fiber square.
Expect where otherwise mentioned, $R$ will denote the DVR
$\bk[[t]]$, with field of fractions $K=k((t))$.
The extension over $R$ of an object over $K$ will, without explicit mention,
entail the passage to a finite extension of $K$, by which $R$ gets replaced
by its integral closure in the extension field.

\subsection{Acknowledgements} The authors thank
D.~Abramovich, D.~Cheong, I.~Ciocan-Fontanine, 
B.~Fantechi, K.~Fukaya, L.~Li, R.~Pandharipande,
F.~Sato, R.~Vakil, and D.~Zuo
for stimulating discussions.
Additionally,
R.~Pandharipande is thanked for providing the content of
Section \ref{rahul}, which is entirely due to him.
B.K. is partially supported by the NRF grant 2011-0001181 through ASARC.
A.K. is partially supported by a grant from the SNF\@. Y.O. is partially
supported by the NSF grant DMS 0904197 and acknowledges the financial support
and excellent research environment of KIAS over many years of summer visits.

\section{Stacks of Fulton-MacPherson degeneration
spaces}\label{X[n]}
\subsection{FM degeneration spaces} Let $X$ be a nonsingular  variety
of dimension $r\ge 1$. Denote by $X[n]$ the
Fulton-MacPherson configuration space of $n$ distinct labeled
points in $X$. We refer to the paper \cite{FM} for the constructions
and the basic properties of the configuration space. The space $X[n]$
has the ``universal family''
\[ \pi _{X[n]}: X[n]^+\ra X[n] \] with disjoint sections
\[ \sigma _i : X[n]\ra X[n]^+ \] for $i=1$, $\ldots$, $n$.
The universal space $X[n]^+$ is an iterated blowup of $X[n]\times
X$ along smooth centers.
Hence, there are two natural projections: $\pi _{X[n]}$ as
mentioned above, and
\[ \pi _X: X[n]^+\ra X \] from the second projection.

\begin{Def}\label{FMdeg} A pair $$ (\pi _{\cW /S}:\cW \ra S, \, \pi _{\cW /X} :\cW  \ra
X)$$ of morphisms is called a {\em Fulton-MacPherson  degeneration space of $X$
over a scheme $S$} (for short, an {\em FM
space of $X$ over $S$}) if:
\begin{itemize}

\item $\cW$ is an algebraic space.

\item There exists an \'etale surjective map $T\ra S$ from a scheme $T$,
 an integer $n>0$, and a fiber square
         \[
         \xymatrix{ \cW |_T \ar[r] \ar[d] & X[n]^+ \ar[d]^{\pi_{X[n]}} \\
         T \ar[r] & X[n]}
         \]
         such  that the pullback  of $\pi _{\cW /X}$
            to $\cW |_{T}$ coincides with the composite
            $\cW |_{T}\ra X[n]^+\ra X$.

\end{itemize}

Furthermore, when $S$ is $\Spec \bk$, we will simply say that
$\cW$ is an {\em FM  space of $X$.} When $n$ is
specified, we will call $\cW$ a {\em level-$n$ FM space.}
\end{Def}

\begin{EG}\label{speed} Let $S=\Spec R = \Spec \bk [[t]]$, and let $g$
be a morphism from $S$ to $X\times X$. Suppose that only the closed point $p\in S$ hits
the diagonal $\Delta$ of $X\times X$ under the map $g$; then there is
a unique lift $\tilde{g}: S \ra X[2]=\mathrm{Bl}_{\Delta}X\times X$.
Let $k$ be the intersection number of $S$ and the exceptional
divisor of $X[2]$, and let $q=g(p)$. By definition,
$X[2]^+$ is the blowup
of $X[2]\times X$ along the proper transform of the small diagonal
$\Delta _{\{ 1,2,3\}}$ of $X\times X\times X$.  We see, by direct computation, that
$\tilde{g}^*X[2]^+$ is isomorphic, as an FM  space
over $S$, to the pullback of the blowup $\mathrm{Bl}_{(p, q)}
S\times X$ of $S\times X$ at the point $(p,q)$, under the base change
\begin{align*} S &\ra S \\
             t&\mapsto  at^k+o(t^k)
\end{align*}
for some nonzero $a\in \bk$.
\end{EG}

One of the goals of Section \ref{X[n]} is to
construct the Artin stack which parameterizes FM
spaces of $X$. To do so, we will recall some elementary properties
of  $X[n]$, $X[n]^+$, and $\pi _{X[n]}$. Using a variant of the FM configuration spaces,
we will see that the stack is described by means of a smooth groupoid
scheme, and it will follow that the stack is algebraic.

\subsection{Basic properties of $\pi _{X[n]} :X[n]^+\ra X[n]$}
In \cite{FM}, the space $X[n]$ is constructed by an iterated
blowup of $X^n$ along nonsingular subvarieties $\Delta _I$ for all
subsets $I\subset N:=\{ 1,2,\ldots,n\}$ with $|I|\ge 2$, where $\Delta
_I$ is the proper transform of the diagonal
\[\{ (x_1,\ldots,x_n) \in X^n \ | \ x_i = x_j, \ \forall\ i, j\in I\}. \] The blowup
order of centers is suitably taken. For convenience, we use the following notation.

\medskip

\noindent{\em Notation}: Let $V_1$ be the blowup of a nonsingular
variety $V_0$ along a nonsingular closed subvariety $Z$. If $Y$ is
an irreducible subvariety of $V_0$, we will use the same notation
$Y$ to denote

\begin{itemize}
\item the total transform of $Y$, if $Y \subset Z$;

\item the proper transform of $Y$, otherwise.
\end{itemize}

This abuse of notation causes no confusion as long as it is
clear to which space $Y$ belongs.

\medskip

The  universal family $X[n]^+$ is  an
iterated blowup \[ X[n]^+:=Y_{n-1} \ra Y_{n-2} \ra \cdots\ra
Y_0:=X[n]\times X .\] The intermediate space $Y_{k+1}$ is the
blowup of $Y_{k}$ along all disjoint nonsingular subvarieties
$\Delta _{I^+}$ for $I\subset N$ and $|I|= n-k$, where $I^+
:= I\cup \{ n+1\}$.

\subsection{A local description of $\pi_{X[n]}$}\label{local-description}
Notice that in any stage $Y_k$, $\Delta _{I^+}$ is the transverse
intersection $\Delta _I \cap \Delta _{a^+}$ if $I\subset N$,
$2\le |I|\le n-k$ and $a\in I$, where $a^+:=\{a,n+1\}$. Also
note that in each $Y_k$, the intersection of $\Delta _{a^+}$ with every
fiber $F_k$ of the natural projection $Y_k\ra X[n]$ is transverse. These observations
provide a local description of the projections $\pi _{k+1}:
Y_{k+1}\ra X[n]$ as follows. If $p$ is a singular point of the
projection $\pi _{k+1}$, then with respect to suitable coordinates the
induced map on completed local rings has the form
\begin{align*}
\widehat{\mathcal{O}}_{\pi _{X[n]} (p)}\cong \bk
[[t_1,\ldots,t_{rn}]]
&\ra
\widehat{\mathcal{O}}_p\cong \bk
[[t_1,\ldots,t_{rn},z_1,\ldots,z_{r+1}]]/(z_1z_2-t_1) \\
t_i &\mapsto t_i,\end{align*}
and the completed local ring of the fiber $F_k$ at the point corresponding to $p$ can be identified with
$\bk[[z_1, z_1z_3, \ldots, z_1z_{r+1}]]$.

%%\begin{figure}
%%\begin{pspicture}(0,0)(2.5,2.5) %\showgrid
%%\psline(0,0)(1.5,0) \psline(1.5,0)(1.5,1.5)
%%\psline(1.5,1.5)(0,1.5)\psline(0,1.5)(0,0)
%%%parallel
%%\psline[linestyle=dotted](1,1)(2.5,1) \psline(2.5,1)(2.5,2.5)
%%\psline(2.5,2.5)(1,2.5)\psline[linestyle=dotted](1,2.5)(1,1)
%%%
%%\psline(1.5,0)(2.5,1) \psline[linestyle=dotted](0,0)(1,1)
%%\psline(1.5,1.5)(2.5,2.5) \psline(0,1.5)(1,2.5)
%%%
%%\pscurve[linestyle=dashed](0,0)(1,0.2)(2.5,1)%\pscurve{-}(0,1.5)(1,1.7)(2.5,2.5)
%%%
%%\pscurve[linestyle=dashed](0,0)(1,0.7)(2,2)(2.5,2.5)
%%% \psline(0,0)(2.7,-0.3)\uput[135]{45}(1,0.75){$L$}
%%%\uput[0](4,0){$t$-line} \uput[90](0,2){$x$-line}
%%%\uput[0](3.5,-0.3){$y$-line}
%%\end{pspicture}
%%\caption{The space $X[n]\times X$ is drawn with the bottom space
%%$X[n]$ which includes  $\Delta _I$ represented by the dashed plane
%%curve. The dashed space curve visualizes $\Delta _{I^+}$.}
%%\end{figure}

\medskip
\noindent{\em Projective Tangent Bundle Map.} 
Since the general fiber of $\pi_{X[n]}$ is just $X$ there is an induced
rational map from $\PP(T (X[n]^+/X[n]))|_{(X[n]^+)^{\mathrm{sm}}}$,
the projectivization of the relative tangent sheaf restricted
to the locus of smooth points of $\pi_{X[n]}$, to $\PP(TX)$.
From the local description it follows that this is actually a morphism
\[ \PP(T (X[n]^+/X[n])|_{(X[n]^+)^{\mathrm{sm}}})\ra \PP(TX).\]
On the level of fibers it says that if $W$ is an FM degeneration space and
$p$ is a smooth point of $W$, then
$\PP T_pW$ is identified naturally
with $\PP T_{\pi _{W/X}(p)}X$.
This identification is independent of the
choice of the isomorphism of $W$ and a fiber of the
universal family $\pi _{X[n]}: X[n]^+\ra X[n]$.

\subsection{Trees of FM degeneration spaces} Notice that an FM
space of $X:=\PP ^1_{\bk}$ without markings over ${\bf
k}$ is a connected genus $0$ nodal curve with a distinguished
component $\PP ^1_{\bk}$. In that case there is a natural dual
tree graph corresponding to the prestable curve. Likewise, for an
FM space $W$ of a nonsingular variety $X$, we
associate a tree whose vertices (resp.\ edges) correspond one-to-one
to components of $W$ (resp.\ $W^{\mathrm{sing}}$); see Figure \ref{figureone}.
The distinguished vertex corresponding to the component, a blowup of
$X$, is called the {\em root} of the tree.
For a given vertex, the number of edges
of the minimal chain connecting the root and the vertex will be
called the {\em level} of the corresponding component of $W$. Let us call
a component of $W$ an {\em end} if it is not the root component
and the valance of its vertex is 1. A component of $W$ will be called
{\em ruled} if it is not the root and its vertex has valance 2.
The descendants of a component with associated vertex are by definition
the components of higher level for which the minimal chain
connecting the root and the associated vertex contains $v$. For each
non-root component $Y$ of $W$, we denote by $D_+(Y)$ the divisor of $Y$
corresponding to the edge which connects the vertex of $Y$ to the
vertex at lower level. The $+$ sign in $D_+(Y)$ is explained by
the fact that the intersection number of any curve on $Y$ and the
divisor is strictly positive unless the curve lies also on the
component one level higher.
 Here the
intersection number is taken in the nonsingular variety $Y$.
Similarly, we define $D_-(Y)$
when $Y$ is a ruled component. Non-root components of $W$ will
also be called {\em screens}.

\begin{figure}
\begin{pspicture}(-.25,1.75)(6.5,6.5)%\showgrid
%%%%%%%%%%%%%
\psline(0,5)(0.5,5)\psline(2,5)(2.5,5) \psline(2.5,5)(2.5,6.5)
\psline(2.5,6.5)(0,6.5) \psline(0,6.5)(0,5)
%%%%%%%%%%%%%%%%%
%%%%%%%%%%%%%%%%%%
\psline(0.5,5.5)(2,5.5) \psline(2,5.5)(2,4)
\psline(2,4)(0.5,4)\psline(0.5,4)(0.5,5.5) %%
%%%%%%%%%%%%%%%%%%%%%%%%%%%%%%%
%%%%%%%%%%%%%%%%%%%%%%%%%%%%%
\psline(0.5,2.75)(0.5,4)%\psline(0.5,4)(0.5,2.5)
\psline(1.25,2.5)(1.4,2.5)%\psline(0.5,2.5)(2,2.5)
\psline(2,3)(2,5.5)%\psline(2,2.5)(2,5.5)
%%%%%%%%%%%%%%%%%%%%%%%%%%%
\psline(0.75,3)(1.25,2.75)\psline(1.25,2.75)(1.25,1.75)\psline(0.75,3)(-0.25,2)
\psline(1.25,1.75)(-0.25,2)
\psline(1.75,3.25)(1.4,2.75)\psline(1.4,2.75)(1.4,1.75)\psline(1.75,3.25)(2.75,2.25)
\psline(1.4,1.75)(2.75,2.25)
\cnode[fillstyle=solid,fillcolor=black](6,6){.2}{A}\cnode[fillstyle=solid,fillcolor=black](6,4.8){.2}{B}\ncline{A}{B}
\cnode[fillstyle=solid,fillcolor=black](6,3.5){.2}{C}\ncline{B}{C}
\cnode[fillstyle=solid,fillcolor=black](5.2,2.5){.2}{E}\cnode[fillstyle=solid,fillcolor=black](6.8,2.5){.2}{F}
\ncline{C}{E}\ncline{C}{F}
\uput[0](6.2,6){Root}\uput[0](6.2,4.8){Ruled}\uput[0](5.4,2.5){End}\uput[0](7,2.5){End}
\uput[0](0,6){$\mathrm{Bl}_pX$} \uput[0](0.5,4.5){\small
$\mathrm{Bl}_{q}\PP ^r$} \uput[0](0.5,3.5){\small
$\mathrm{Bl}_{q',q''}\PP ^r$} \uput[0](0.25,2.1){\small $\PP ^r$}
\uput[0](1.4,2.1){\small $\PP ^r$}
\end{pspicture}
\caption{An FM degeneration space and its corresponding tree
graph.}
\label{figureone}
\end{figure}
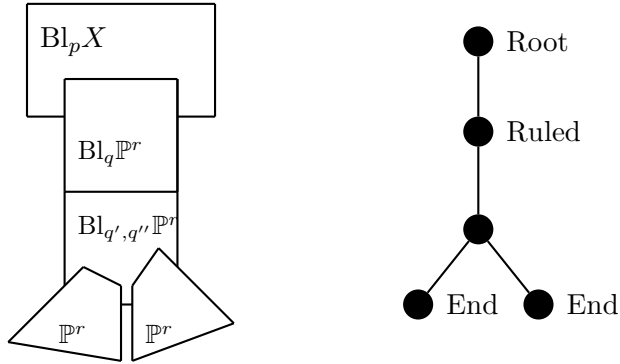

\subsection{Automorphisms} An FM degeneration $W$ of $X$ over $\Spec \bk$
is called a stable degeneration in \cite{FM}
since every  ruled (resp.\ end) screen contains at least one (resp.\ two) marked point(s).
We will forget about markings, and consider the automorphism groups that
then arise.
Define an automorphism of $W/X$ to be an automorphism $\psi$ of $W$ fixing
$X$, that is, $\pi _{W/X}\circ \psi = \pi _{W/X}$. Since there are
no markings in $W$, the automorphism group $\mathrm{Aut}(W,X)$
is nontrivial if there is an end
component. For example, the group is $\mathbb{G}_a^r\rtimes \mathbb{G}_m$
if the tree of $W$ has only two vertices. Note that
each end (resp.\ ruled) component $Y$ is (non-canonically)
isomorphic to $\PP ^r_{\bk}$ (resp.\ the blowup of $\PP ^r_{\bf k}$
at a point) with a marked divisor $D_+(Y)$ (resp.\ $D_{\pm}(Y)$).
The automorphism group of an end (resp.\ ruled) $Y$
fixing $D_+(Y)$ (resp.\ $D_{\pm}(Y)$) is a subgroup, isomorphic to
$\mathbb{G}_a^r\rtimes \mathbb{G}_m$ (resp.\ $\mathbb{G}_m$), of
$\mathrm{Aut}(W,X)$, where notation $\mathbb{G}_m$ (resp. $\mathbb{G}_a$) is 
for the multiplicative (resp. additive) group $\mathbf{k}^\times$
(resp. $\mathbf{k}$). So, for general
$(W,X)$, we see that
\[  \mathrm{Aut}(W,X)
\cong \bigg(\prod _{W_i :\text{ ruled} }\mathbb{G}_m
\bigg) \times \bigg(\prod _{W_i  : \text{ ends}}
\mathbb{G}_a^r\rtimes\mathbb{G}_m\bigg)\]

More generally, for an FM degeneration space $\cW$ of $X$ over $S$
we may analogously define $\mathrm{Aut}(\mathcal{W}/S,X)$, at least as
a presheaf of groups.
In case $\cW\to S$ is projective, one may use relative Hilbert schemes
(for example, see \cite{Kol}) to conclude that $\mathrm{Aut}(\mathcal{W}/S,X)$
is represented by a group scheme over $S$.
It is at least an algebraic space in general,
as a consequence of the results in the rest of this section.

\subsection{Operations}\label{operation}
\noindent{\em Forgetting Markings.} Since only the labeling of
points matters, we also use notation $X[M]$ for $X[m]$ if $M$ is a set
of cardinality $m$. For example, $X[N]$ will be used instead of
$X[n]$, where $N=\{1,\ldots,n\}$. There is a natural iterated blow-down map $X[N]$ to $X[J]\times X^{N\setminus J}$
if $J\subset N$. By similar reasoning, there is a natural  blow-down map
$X[N]^+ \ra X[J]^+\times X^{N\setminus J}$. To see the latter, we may assume that $J=N-1$, that is,
$\{1,\ldots,n-1\}$. Then, by the results of L. Li on rearrangements of centers \cite{L.Li},
$X[N]^+$ coincides with the iterated blowup of $X[J]^+\times X$ along
$\Delta _T$, $n\in T$, $|T|\ge 2$, where $|T|\ge 3$ whenever $n+1\in T$.
($X[N]^+$ is an iterated blowup of $X^J\times X^2$ along centers which form
a building set
$\{\Delta_T\,:\,T\subset N+1,\,T\ne \{a,n+1\},\,a\in N\}$.
Then using a building set order satisfying
$\Delta_{T_1}\prec \Delta_{T_2}$ if $n\in T_2\setminus T_1$ we see that
$X[N]^+$ is the iterated blowup of $X[J]^+\times X$ along
$\Delta _T$, $n\in T$, $|T|\ge 2$, where $|T|\ge 3$ whenever $n+1\in T$.)
Combined with projections, we obtain natural forgetful maps
$X[N]\ra X[J]$ and $X[N]^+\ra X[J]^+$.

%%There is another useful
%%construction of $X[N]$ as following. The space $X[N]$ coincides
%%with the closure of $X^n\setminus \cup _{I}\Delta _I$ in
%%\[ X^n\times \prod _{I\subset N,\, |I|\ge 2}\mathrm{Bl}_{\Delta _I} X^I. \]
%%Similarly, $X[N]^+$ is the closure of
%% \[ X[N]\times X \setminus
%%\bigcup _{I\subset N, \, |I|\ge 2 } \Delta _{I^+} \] in \[
%%(X[N]\times X)\times \prod _{I\subset N,\, |I|\ge
%%2}\mathrm{Bl}_{\Delta _{I^+}}(X[N]\times X),
%%\] where these $\Delta _{I^+}$ are of codimension $n+1$.

By forgetting a point labeled by, say, $n+1$, there is a natural
commutative diagram
\[
\xymatrix{
X[N+1]^+ \ar[r]^(0.55){\pi_+} \ar[d]^{\pi_{X[N+1]}} &
X[N]^+ \ar[d]^{\pi_{X[N]}} \\
X[N+1] \ar[r]^(0.55)\pi & X[N]
}
\]
It induces a map $\tilde{\pi}_{+}: X[N+1 ]^+ \ra X[N+1 ]\times_{X[N]} X[N]^+$.

\begin{Lemma}\label{forgetting} The map $\tilde{\pi}_+$ is an
isomorphism over the open locus of $X[N+1 ]$ where $\pi _{+} \circ
\sigma_{n+1}$ meets neither any $\sigma _i$, $i=1$, $\ldots$, $n$ nor the
relative singular locus of $X[N]^+/X[N]$.
\end{Lemma}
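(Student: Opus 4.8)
The plan is to exploit the interpretation of the Fulton--MacPherson spaces $X[m]$ as fine moduli spaces of stable pointed degenerations, and to check that over the locus in question the passage from $X[N]$ to $X[N+1]$ neither creates nor contracts a screen, so that $\tilde\pi_+$ is simply an identification of two presentations of one and the same universal family.

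To fix notation, set $\lambda:=\pi_+\circ\sigma_{n+1}\colon X[N+1]\to X[N]^+$. Because $\pi_{X[N]}\circ\pi_+=\pi\circ\pi_{X[N+1]}$, the morphism $\lambda$ lies over $\pi$, and the induced map $\tilde\pi_+=(\pi_{X[N+1]},\pi_+)$ satisfies $\tilde\pi_+\circ\sigma_{n+1}=\mathrm{graph}(\lambda)$ and $\tilde\pi_+\circ\sigma_i=\mathrm{graph}(\sigma_i\circ\pi)$ for $i\le n$. Writing $U\subseteq X[N+1]$ for the open locus of the statement, restriction over $U$ gives $\tilde\pi_+|_U\colon X[N+1]^+|_U\to U\times_{X[N]}X[N]^+$, and this is what must be shown to be an isomorphism. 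From the building-set descriptions in Section~\ref{operation} one also sees that $\tilde\pi_+$ is, globally, the iterated blow-down of the exceptional divisors over the centers $\Delta_T$ with $\{n+1,n+2\}\subseteq T$ and $|T|\ge 3$, which have codimension $\ge 2r$; in particular $\tilde\pi_+$ is proper and birational, being an isomorphism over the dense open of $X[N+1]$ parametrizing $n+1$ distinct points of $X$.

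Next I would invoke the moduli interpretation: $X[m]$ is a fine moduli space for families of $m$-pointed stable FM degenerations of $X$, with universal family $\pi_{X[m]}\colon X[m]^+\to X[m]$, and $X[m+1]\to X[m]$ forgets the last point and stabilizes. The family $X[N]^+|_U\to U$ carries its $n$ tautological disjoint sections $\sigma_1,\ldots,\sigma_n$ into its smooth locus; the two conditions defining $U$ say exactly that the further section $\mathrm{graph}(\lambda|_U)$ is disjoint from $\sigma_1,\ldots,\sigma_n$ and lands in the smooth locus of $X[N]^+|_U/U$. Hence $\mathrm{graph}(\lambda|_U)$ marks a smooth, previously unmarked point on each fibre, so no new component appears and the stability condition on screens recalled in the discussion following Definition~\ref{FMdeg} is preserved. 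Therefore $\big(X[N]^+|_U;\,\sigma_1,\ldots,\sigma_n,\mathrm{graph}(\lambda|_U)\big)$ is a family of $(n+1)$-pointed stable FM degenerations of $X$ over $U$, classified by a unique $\phi\colon U\to X[N+1]$ with $\phi^*X[N+1]^+\cong U\times_{X[N]}X[N]^+$ compatibly with all $n+1$ sections.

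Finally I would identify $\phi$ with the inclusion: over the dense open of $U$ parametrizing $n+1$ distinct points of $X$ the family above is the tautological configuration, so $\phi$ agrees there with the inclusion, and since $X[N+1]$ is separated and $U$ is reduced this forces $\phi=\mathrm{incl}$. Then $U\times_{X[N]}X[N]^+\cong X[N+1]^+|_U$ as families with their $n+1$ marked sections, and as $\tilde\pi_+|_U$ is a section-preserving morphism between these two families which is the identity over that dense reduced open, it coincides with the isomorphism. The main obstacle I anticipate is the second of these steps: converting the two scheme-theoretic open conditions cutting out $U$ into the fibrewise assertion ``the new section meets no marked point and no node,'' so that the data over $U$ really is a flat family eligible to be classified by $X[N+1]$; tied to this is the precise identification of $\lambda=\pi_+\circ\sigma_{n+1}$ with the moduli-theoretic forgetful/contraction map $X[N+1]\to X[N]^+$. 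Once these are in place, the density and compatibility arguments are routine.
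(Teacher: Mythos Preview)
Your approach is correct but takes a more moduli-theoretic route than the paper's. The paper argues directly from the blowup description: it observes that $\Delta_{i^+}$ in $X[N+1]^+$ coincides with the proper transform of $\Delta_{i^+}$ in $X[N]^+\times X$ for $i\le n$, which yields the section compatibility $\pi_+\circ\sigma_i=\sigma_i\circ\pi$ that you simply assert, and then finishes by contradiction via stability: since $\tilde\pi_+$ is a blow-down, any failure to be an isomorphism over a point $u\in U$ would contract some component of the fiber $X[N+1]^+_u$; the definition of $U$ forces all $n+1$ sections to land on components that survive the contraction, so the contracted piece would be an unstable locus of $X[N+1]^+/X[N+1]$, impossible. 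You instead run the argument in the positive direction, invoking the universal property of $X[N+1]$ to classify the $(n+1)$-pointed family $X[N]^+|_U$ and identifying the resulting classifying map with the inclusion by density and separatedness. Your version is conceptually cleaner but leans on the fine-moduli property of $X[m]$ from \cite{FM}, which the paper itself invokes only later (in the proof of Proposition~\ref{univ}); the paper's version needs nothing beyond the explicit iterated-blowup structure and the stability of the fibers of $X[N+1]^+/X[N+1]$. The one spot you should firm up is exactly the one you flag at the end: the compatibility $\pi_+\circ\sigma_i=\sigma_i\circ\pi$ for $i\le n$ is the content of the $\Delta_{i^+}$ observation and must be in hand before you can claim that $\tilde\pi_+$ is section-preserving and hence compare it to your classifying isomorphism.
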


\begin{proof}
We note that $\Delta _{i^+}$  in $X[N+1]^+$ coincides with
the proper transform of $\Delta _{i^+}$ in $X[N]^+\times X$
for $i=1$, $\ldots$, $n$. Hence
$\pi _+\circ \sigma _i = \sigma _i \circ \pi $,
and the result is established, for otherwise there would be an unstable
locus in $X[N+1]^+/X[N+1]$, which would be a contradiction.
\end{proof}

%%\begin{Rmk}\label{contraction} In general, if the forgetting
%%$\sigma _{n+1}$ makes a stable degeneration of $X$ unstable
%%(see pages 194 and 195 in \cite{FM} for the definition of the
%%stable degeneration), then the forgetting mapping contracts the
%%nonstable component in the obvious way.
%%\end{Rmk}

\noindent{\em Lifting.}
Let $g$ be a map from $S$ to $X[n]$ and let $h$ be a section of $g^*X[n]^+$.
Assume that the image of $h$ meets neither the relative singular
locus of $X[n]^+/X[n]$ nor any sections $\sigma _i\circ g$, $i\in N$.
Then there is a unique lift $\tilde{h}:S\ra X[n+1]$ of $h$, since
$X[n+1]$ is the blowup of $X[n]^+$ along the images of sections
$\sigma _i$.
We note that the $S$-scheme $g^*X[n]^+$ is canonically
isomorphic to the $S$-scheme $\tilde{h}^*X[n+1]^+$ preserving $\sigma
_i$, $i=1$, $\ldots$, $n$, due to Lemma \ref{forgetting} and the diagram

\[
\xymatrix{
     &   X[n+1] \ar[d]     & X[n+1]^+ \ar[dd] \ar[l] \\
     &   X[n]^+ \ar [d]    \ar@{=}[rd] & \\
S \ar[r]_g \ar[ur]_h \ar@{-->}[uur]^{\tilde{h}} & X[n]  & X[n]^+. \ar[l]}
\]
This diagram is commutative except the trapezoid. The trapezoid however
commutes if it is restricted to the image $\Delta _{\{n+1,n+2\}}$ of the section $\sigma _{n+1}$.
This implies that $\sigma _{n+1}\circ \tilde{h}$ coincides with $h$ when
$\tilde{h}^*X[n+1]^+$ is identified with $g^*X[n]^+$.
 The iterated
operation of liftings will be a key tool in Section
\ref{MainSection}.

\begin{Prop}\label{univ} Let $f_i:S\ra X[n]$ be a morphism
of  $\bk$-schemes. Suppose that there is an isomorphism
between $f_i^*X[n]^+$, $i=1$, $2$, fixing $X$ and preserving the $n$ induced
sections. Then $f_1=f_2$.
\end{Prop}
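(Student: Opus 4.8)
The plan is to recover, from the given data, the composition of each $f_i$ with the Fulton--MacPherson blow-down $p\colon X[n]\to X^n$, and then to note that $p$ induces an injection on $S$-points; thus the equality $p\circ f_1=p\circ f_2$ will already force $f_1=f_2$, and no deformation theory enters.

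First I would recover the ``positions of the points.'' Let $\sigma_1,\dots,\sigma_n\colon X[n]\to X[n]^+$ and $\pi_X\colon X[n]^+\to X$ be the universal sections and the projection. I claim $(\pi_X\circ\sigma_1,\dots,\pi_X\circ\sigma_n)\colon X[n]\to X^n$ equals $p$: over the dense open subscheme of $X[n]$ on which $p$ is an isomorphism (mapping isomorphically onto the configuration space of distinct points in $X^n$), the family $X[n]^+$ restricts to the trivial family, with $\sigma_j$ the $j$-th tautological section and $\pi_X$ the second projection, so the two morphisms $X[n]\to X^n$ agree there; since $X[n]$ is integral and $X^n$ is separated, they agree everywhere. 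Writing $s^i_j:=f_i^*\sigma_j$ and $\rho_i\colon f_i^*X[n]^+\to X$ for the sections and projection induced by $f_i$, this gives $\rho_i\circ s^i_j=(\pi_X\circ\sigma_j)\circ f_i$, hence $p\circ f_i=(\rho_i\circ s^i_1,\dots,\rho_i\circ s^i_n)$. Now an isomorphism $\Phi\colon f_1^*X[n]^+\to f_2^*X[n]^+$ over $S$ that fixes $X$ and carries induced sections to induced sections satisfies $\rho_2\circ\Phi=\rho_1$ and $\Phi\circ s^1_j=s^2_j$, so $\rho_1\circ s^1_j=\rho_2\circ s^2_j$ for every $j$; that is, $p\circ f_1=p\circ f_2$.

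It then remains to check that $p$ is injective on $S$-points. By construction $X[n]$ is obtained from $X^n$ by a finite chain of blow-ups along nonsingular centers, $X[n]=Z_m\to Z_{m-1}\to\dots\to Z_0=X^n$, whose composite is $p$. For each step, the universal property of blowing up identifies $\Mor(S,Z_{j+1})$ with the subset of $\Mor(S,Z_j)$ consisting of morphisms that pull the ideal sheaf of the center back to an invertible sheaf; in particular $\Mor(S,Z_{j+1})\to\Mor(S,Z_j)$ is injective, and composing over $j$ shows that $f\mapsto p\circ f$ is injective on $\Mor(S,X[n])$. Therefore $p\circ f_1=p\circ f_2$ implies $f_1=f_2$.

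The only step that is not purely formal is the identity $(\pi_X\circ\sigma_j)_j=p$, which rests on the explicit iterated-blow-up descriptions of $X[n]$ and of $X[n]^+$ recalled above; I expect this to be the only point requiring care. (One could instead argue by induction on $n$ via the lifting operation and Lemma \ref{forgetting}, but invoking the blow-up universal property directly is cleaner.)
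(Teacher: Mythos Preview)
Your second step is where the argument breaks. The map $p\colon X[n]\to X^n$ is \emph{not} injective on $S$-points. The universal property of blowing up says that a morphism $h\colon S\to Z_j$ with $h^{-1}\mathcal{I}\cdot\mathcal{O}_S$ invertible lifts \emph{uniquely} to $Z_{j+1}$; it does \emph{not} say that every morphism $S\to Z_{j+1}$ arises this way. If $g\colon S\to Z_{j+1}$ factors through the exceptional divisor, then $(\pi_j\circ g)^{-1}\mathcal{I}\cdot\mathcal{O}_S$ is the zero ideal, not invertible, and many such $g$ lie over the same $\pi_j\circ g$. Already for $n=2$ with $\dim X\ge 2$: two distinct $\bk$-points in the same exceptional fibre of $X[2]=\mathrm{Bl}_\Delta(X\times X)$ have the same image in $X^2$. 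So knowing $p\circ f_1=p\circ f_2$ does not give $f_1=f_2$.

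What is missing is exactly the ``screen'' information carried by the exceptional loci. The paper recovers it by invoking the functorial description of $X[n]$ (Theorem~4 of \cite{FM}): an $S$-point of $X[n]$ is a morphism $S\to X^n$ together with, for each $J\subset N$ with $|J|\ge 2$, a line-bundle quotient $f^*\mathcal{I}_{\Delta_J}\to\mathcal{L}_J$, subject to compatibilities. Your first step correctly recovers the map to $X^n$; the remaining work is to show that the quotients $\mathcal{L}_J$ can be read off from the sections $\sigma_i$ via the identification $\mathcal{I}_{\Delta_J}|_x\cong((T_xX)^J/T_xX)^*$ and the forgetting maps $X[N]^+\to X[N]\times_{X[J]}\mathrm{Bl}_{\Delta_{J^+}}(X[J]\times X)$, plus a base-change argument ensuring the given isomorphism of $f_i^*X[N]^+$ descends to these blow-downs. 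This screen data is precisely what distinguishes $f_1$ from $f_2$ when $p\circ f_1=p\circ f_2$, and your argument has discarded it.
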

\begin{proof}
This can be seen by the universal property
of Theorem 4 in \cite{FM} and the natural identification
$\mathcal{I}_{\Delta _J} |_x \cong ((T_xX)^J/T_x)^*$, where
$\mathcal{I}_{\Delta _J}$ is the ideal sheaf of diagonal $\Delta
_J$ of $X^J$ and $x\in X=\Delta _J \subset X^J$,
for $J\subset N$, $|J|\ge 2$. (See page 195 of
\cite{FM} for a friendly explanation of the universal property and
its relation with screens.)
Let $f$ denote $f_i$ followed by the map $X[N]\ra X^N$.
Then, \'etale locally at a point which is mapped into $\Delta _S$ in $X^J$,
we can express the data $f^*\mathcal{I}_{\Delta _J} \ra
\mathcal{L}_J\cong \mathcal{O}_S$ by sending $x_{i,j}-x_{i',j}$ to $\sigma
_{i,j}-\sigma _{i',j}$ for $i$, $i'\in J\subset N$, $j=1$, $\ldots$, $r$,
where $x_{i,j}$ are (copies of) coordinates of the
$i$-th component $X$ of $X^n$, and $\sigma _{i,j}$, $j=1$, $\ldots$, $r$
are fiberwise direction coordinates of $X[J]^+$,
pulled back via the composite of
the sections $\sigma_i$ with the forgetting map
$$\tilde\pi_+\colon X[N]^+\to X[N]\times_{X[J]}\mathrm{Bl}_{\Delta _{J^+}} (X[J]\times X).$$

We are thus reduced to showing that the given isomorphism induces an
isomorphism between the $f_i^*(X[N]\times_{X[J]}\mathrm{Bl}_{\Delta _{J^+}} (X[J]\times X))$, $i=1$, $2$.
This follows from
the claim that for any $S\to X[N]$ the following base change property holds:
\begin{align*}
(1_S\times \tilde\pi_+)_*\mathcal{O}_{S\times_{X[N]}X[N]^+}&\cong
\mathcal{O}_{S\times_{X[J]}\mathrm{Bl}_{\Delta _{J^+}} (X[J]\times X)},\\
R^p(1_S\times \tilde\pi_+)_*\mathcal{O}_{S\times_{X[N]}X[N]^+}&=0\qquad
\text{for $p>0$}.
\end{align*}
(The assertion about direct image would suffice, but the following
reduction step uses also the vanishing of the higher direct images.)
It suffices to treat the case $J=N-1$, and
since $\tilde\pi_+$ is a proper morphism of flat
$X[N]$-schemes, a cohomology-and-base-change argument
as in \S{}II.5 of \cite{Mu} allows us to reduce to the case $S=\Spec \bk$.
Then $1_S\times \tilde\pi_+$ is (when it is not an isomorphism)
a proper morphism of FM degeneration spaces contracting a ruled
component or an end component.
In either case, the argument proceeds using the Theorem on Formal Functions:
for the direct image, by a computation in local coordinates, and for
the higher direct images, as in the proof of Theorem I.9.1(ii) of \cite{BHPV}.
\end{proof}

\subsection{Spaces $X[n:m]$ and $X[n:m]^+$}
In this subsection we define a compactification $X[n,m]$ of the
configuration space of pairs of black colored $n$ ordered points
in $X$ and red colored $m$ ordered points in $X$. Let $N+M$ denote
$\{1_b,\ldots,n_b\}\sqcup \{1_r,\ldots,m_r\}$, a collection of $n$
``black numbers'' and $m$ ``red numbers''. We will call the
labels of $N$ black and the labels of $M$ red. Then we want
to compactify
\[ \bigg( X^{N}\setminus \bigcup _{B\subset N \ : \ |B|=2}\Delta _B \bigg)\times
\bigg( X^M\setminus \bigcup _{R\subset M \ : \ |R|=2 } \Delta _R
\bigg)
\] allowing red points to collide with black points but not
allowing any two points with the same color to coincide. A
``universal family'' can be constructed by an iterated blowup of
$X[n,m]\times X$ along smooth centers. The centers will be proper
transforms of suitable diagonals in the morphism $X[n,m]\times
X\ra X^n\times X^m\times X$. Those diagonals are
\begin{itemize}

\item $\Delta _{I^+}$ with
$|I_{\mathrm{black}}||I_{\mathrm{red}}|\ge 2$,
$I_{\mathrm{black}}:=I\cap N$ and $I_{\mathrm{red}}:=I\cap M$;

\item $\Delta _{B^+}$ with $|B|\ge 2 $, $B\subset N$;

\item $\Delta _{R^+}$ with $|R|\ge 2$, $R\subset M$,

\end{itemize}
where $A^+=A\cup \{n+m+1\}$.

A detail of the construction is as follows. First, let
$X[1,1]=X^2$. Now $X[n,m]$ and its universal family will be
defined inductively. Start with $X[n,m] \times X$ and blow it up
along $\Delta _{I^+}$ and then $\Delta _{B^+}$ and $\Delta
_{R^+}$. By \cite{L.Li}, any order of the blowups along centers of each type will
give the same outcome as long as the blowup along $\Delta _{I^+_1}$
is taken before the blowup along $\Delta _{I^+_2}$ whenever
$|I_1|> |I_2|$. Denote by $X[n,m]^+$ the result of the blowups.
Then $\Delta _{i_b^+}$ and $\Delta _{i_r^+}$ provide sections
$\sigma _{i_b}$ and $\sigma _{i_r}$ of $X[n,m]^+\ra X[n,m]$. Now
define $X[n,m+1]$ to be the blowup of $X[n,m]^+$ along all $\Delta
_{\{b,r\} ^+}$ and then all $\Delta _{r^+}$. We mark the last
label as $(m+1)_r$. Similarly, define $X[n+1,m]$ to be the blowup
of $X[n,m]^+$ along all $\Delta _{\{ b,r\}^+}$ and then all
$\Delta _{b^+}$. We mark the last label as $(n+1)_b$. In fact,
$X[n,m]$ is the closure of
\[ X^{n+m}\setminus \bigcup _{\Delta} \Delta \] in
\[ X^{n+m} \times \prod _{\Delta} \mathrm{Bl}_{\Delta} X^{n+m} \] where
$\Delta$ runs over all diagonals except those of type
$\Delta _{b,r}$, $b\in N$ and $r\in M$. The
claims above can be justified by directly modifying the arguments in
\cite{FM} or by L. Li's general approach to the wonderful
compactification \cite{L.Li}.

Define $X[n:m]$ to be the maximal
open subset of $X[n,m]$ such that the restriction of the universal
family $X[n,m]^+$ to the subset is still stable after forgetting
red markings but keeping the black markings and vice versa.
Here
``stable'' means by definition that every fiber has only the
trivial automorphism fixing $X$ and the remaining marked points.
Denote by $X[n:m]^+$ the restriction of the fibration $X[n,m]^+\ra
X[n,m]$ to $X[n:m]$.

\begin{Lemma}\label{XnmUniv}
The image of a map $g: S\ra X[n_1,n_2]$ is in $X[n_1:n_2]$ if and
only if $g^*X[n_1,n_2]^+$ is isomorphic to $g_i^*X[n_i]^+$
preserving $n_i$-sections and fixing $X$, for $i=1, 2$, where $g_i$
is the composite $S \stackrel{g}{\rightarrow} X[n_1,n_2]\ra
X[n_i]$.
\end{Lemma}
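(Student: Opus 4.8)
The plan is to reduce this to the universal property established in Proposition \ref{univ} together with the inductive blowup description of $X[n_1,n_2]^+$ given just above the statement. The forward direction is essentially by construction: if $g$ factors through $X[n_1:n_2]$, then by the defining property of $X[n:m]$ the family $g^*X[n_1,n_2]^+$, after forgetting the red (resp.\ black) markings, is a stable FM degeneration space with $n_1$ (resp.\ $n_2$) sections and fixing $X$. One then invokes the universal property of $X[n_1]$ (Theorem 4 of \cite{FM}, as used in Proposition \ref{univ}) to see that this forgetful family is pulled back from $X[n_1]^+$ along a unique map, which must be $g_1$ since the two maps $S\to X[n_1]$ both induce isomorphic families with the $n_1$ marked sections — here Proposition \ref{univ} gives the uniqueness. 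Symmetrically for $i=2$. The only point to check carefully is that ``forgetting red markings'' on $g^*X[n_1,n_2]^+$ really does produce the pullback of the black-only universal family, and this is exactly what the inductive construction of $X[n,m]^+$ by blowing up $\Delta_{I^+}$, $\Delta_{B^+}$, $\Delta_{R^+}$ delivers once one contracts the components created by the $\Delta_{R^+}$ and mixed $\Delta_{I^+}$ centers — a local-coordinate verification analogous to the blow-down maps $X[N]^+\to X[J]^+\times X^{N\setminus J}$ discussed in \S\ref{operation}.

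For the converse, suppose $g^*X[n_1,n_2]^+\cong g_i^*X[n_i]^+$ preserving the $n_i$ sections and fixing $X$, for both $i=1,2$. I would argue pointwise: it suffices, by the openness of the locus $X[n_1:n_2]\subset X[n_1,n_2]$ and since the condition is that every fiber be stable after forgetting markings of one color, to show that for each geometric point $s\in S$ the fiber $W_s:=g^*X[n_1,n_2]^+|_s$ becomes stable (no nontrivial automorphism fixing $X$ and the remaining sections) after forgetting the red markings, and likewise after forgetting the black markings. But forgetting the red markings of $W_s$ yields, by the compatibility just discussed, the fiber of $g_1^*X[n_1]^+$ at $s$, which is a fiber of the universal family $X[n_1]^+\to X[n_1]$ and hence is a stable FM degeneration space with its $n_1$ marked sections — stability of fibers of $X[n_1]^+\to X[n_1]$ being part of the Fulton–MacPherson theory. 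The symmetric statement with black and red interchanged follows from the $i=2$ hypothesis. Hence every fiber of $g^*X[n_1,n_2]^+$ is stable after forgetting markings of either color, which is precisely the condition defining $X[n_1:n_2]$; therefore $g$ factors through $X[n_1:n_2]$.

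The main obstacle I anticipate is the first direction's bookkeeping claim: that on $g^*X[n_1,n_2]^+$ the operation of forgetting the red markings agrees with pulling back the forgetful-blowdown $X[n_1,n_2]^+\to X[n_1]^+$ (suitably defined) along $g$, i.e.\ a base-change statement for a blow-down of FM-type families. This is the analogue, in the two-color setting, of the base-change property $(1_S\times\tilde\pi_+)_*\mathcal{O}\cong\mathcal{O}$ and vanishing of higher direct images proved in Proposition \ref{univ}; I would handle it the same way, by a cohomology-and-base-change argument reducing to $S=\Spec\bk$ and then a computation in the local coordinates of \S\ref{local-description} describing the contraction of each ruled or end component created by a red or mixed center. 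Once this compatibility is in hand, both directions are formal consequences of the universal property in Proposition \ref{univ} and the definition of $X[n:m]$.
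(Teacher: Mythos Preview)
Your proposal is correct in outline, but it takes a considerably longer route than the paper and manufactures an obstacle that the paper's argument avoids entirely.

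The paper's proof is two sentences: one observes that there are naturally induced morphisms
\[
h_i \colon X[n_1,n_2]^+ \longrightarrow X[n_1,n_2]\times_{X[n_i]} X[n_i]^+
\]
over $X[n_1,n_2]$ (these are the fiberwise stabilization/contraction maps after forgetting one color of markings), and then simply asserts that $X[n_1:n_2]$ is by definition the maximal open locus over which both $h_1$ and $h_2$ are isomorphisms. Given this, both directions of the lemma are tautological: $g$ lands in $X[n_1:n_2]$ iff $g^*h_i$ is an isomorphism for $i=1,2$, which is exactly the isomorphism $g^*X[n_1,n_2]^+\cong g_i^*X[n_i]^+$ preserving the $n_i$ sections and fixing $X$.

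Your converse direction (the pointwise stability check) is the same idea as the paper's, just phrased fiberwise. Your forward direction, however, tries to reconstruct the isomorphism by appealing to a universal property and then confronting a base-change problem for the forgetful blow-down; but Proposition~\ref{univ} only gives \emph{uniqueness} of a map $S\to X[n_1]$, not existence of one realizing the family as a pullback, so you are forced into the cohomology-and-base-change computation you flag as the ``main obstacle.'' The paper never needs this: the map $h_1$ is already there, and over $X[n_1:n_2]$ it contracts nothing (since by definition no component becomes unstable), hence is an isomorphism. In short, instead of proving a base-change property for a contraction, recognize that on the relevant open locus the contraction is the identity.
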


\begin{proof} For $i=1, 2$,
there is the naturally induced map \[ h_{i}: X[n_1,n_2]^+\ra
X[n_1,n_2]\times _{X[n_i]}X[n_i]^+
\] over $X[n_1,n_2]$. Then $X[n_1:n_2]$ is the maximal
open subset of $X[n_1,n_2]$ over which $h_{1}$ and $h_{2}$ are
isomorphisms.
\end{proof}

\begin{Prop}\label{PairOper}
Consider $g_1: S\ra X[N]$, $N=\{1_b,\ldots,n_b\}$
with extra sections $\sigma_i$,
$i\in M=\{1_r,\ldots,m_r\}$, of $g_1^*X[N]^+$ such that the extra sections
meet  neither each other nor the
relative singular locus of $g_1^*X[N]^+/S$.
Then it induces a unique map $g: S\ra X[N,M]$ such that
canonically, $ g^*X[N,M]^+\cong g_1^*X[N]^+$ preserving
$N$-labeled sections and fixing $X$; the extra sections
coincide with the sections from the second label $M$.

Furthermore, if each geometric fiber of $g_1^*X[N]$ is stable with respect to
markings by $\sigma_i$, $i\in M$, then canonically
$g^*X[N,M]^+\cong g_2 ^*X[M]^+$ preserving $M$-labeled
sections and fixing $X$, where $g_2$ is the natural composite
$S\stackrel{g}{\ra} X[N,M]\ra X[M]$.
\end{Prop}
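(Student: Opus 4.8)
The plan is to reduce the existence and uniqueness of $g$ to the corresponding statements for the one-point spaces $X[N]$, built up one red point at a time by repeated application of the lifting operation from Section \ref{operation}, and then to identify the resulting target with $X[N,M]$ using the inductive construction of $X[n,m]$ and the universal property in Lemma \ref{XnmUniv}. First I would treat the case $|M|=1$. Here we have a single extra section $\sigma_{1_r}$ of $g_1^*X[N]^+$ meeting neither the relative singular locus nor any of the $N$-labeled sections. I would \emph{not} invoke the lifting of Section \ref{operation} to produce a map to $X[N+1]$ (which would forbid the red point from colliding with black points); instead I observe that, by the explicit construction in Section \ref{X[n]} (``Spaces $X[n:m]$''), $X[N,M]=X[n,1]$ is obtained from $X[N]^+=X[n]^+$ by blowing up along the $\Delta_{\{b,r\}^+}$ and then along the $\Delta_{r^+}$, and these centers are local complete intersection subschemes whose formation commutes with base change once we know the incidence behavior of $\sigma_{1_r}$ with the $\sigma_b$ and with the relative singular locus. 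Thus the section $\sigma_{1_r}$, together with $g_1$, picks out a map $g\colon S\to X[n,1]$, and the canonical identification $g^*X[n,1]^+\cong g_1^*X[n]^+$ preserving $N$-labeled sections and fixing $X$ follows because blowing up along a center disjoint (fiberwise) from the locus where we need the identification does not change that locus; the extra section becomes the red section by construction.

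Next I would induct on $m=|M|$. Given the conclusion for $m$, suppose we have $g_1\colon S\to X[N]$ with $m+1$ extra sections satisfying the hypotheses. Apply the inductive hypothesis to the first $m$ of them to get $g'\colon S\to X[N,M']$ with $(g')^*X[N,M']^+\cong g_1^*X[N]^+$ preserving sections and fixing $X$; under this identification the $(m+1)$-st extra section becomes a section of $(g')^*X[N,M']^+$ meeting neither the other red or black sections nor the relative singular locus. Now $X[N,M'+1_r]$ is, by construction, the blowup of $X[N,M']^+$ along the $\Delta_{\{b,r\}^+}$ and then the $\Delta_{r^+}$ for the new red label, so the $|M|=1$ argument applies verbatim with $X[N,M']^+$ in place of $X[n]^+$, producing $g\colon S\to X[N,M'+1_r]$ with the desired canonical identification. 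Composing the identifications gives $g^*X[N,M]^+\cong g_1^*X[N]^+$ preserving all sections and fixing $X$. Uniqueness of $g$ at each stage comes from Proposition \ref{univ}: two maps to $X[N,M]$ inducing isomorphic pullbacks of the universal family preserving all sections and fixing $X$ agree after composing with $X[N,M]\to X[N]$ by Proposition \ref{univ} applied to the black labels, and then the positions of the red sections — which are recovered from the isomorphism — pin down the remaining blowup coordinates; more cleanly, one can reformulate this via Lemma \ref{XnmUniv} and the universal property of the iterated blowup defining $X[N,M]$.

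For the last sentence, suppose in addition that each geometric fiber of $g_1^*X[N]^+$ is stable with respect to the markings $\sigma_i$, $i\in M$, \emph{alone}. The point is that stability of a fiber with respect to the red markings means precisely that every screen carries enough red marked points; this is exactly the condition under which the iterated blow-down $X[N,M]^+\to X[N]\times_{X[\emptyset]} X[M]^+$, i.e. the map $h_2$ of Lemma \ref{XnmUniv}, is an isomorphism over the image of $g$. So the fiberwise stability hypothesis places the image of $g$ in the locus where $h_2$ is an isomorphism, and pulling back the isomorphism $h_2$ along $g$ gives the canonical identification $g^*X[N,M]^+\cong g_2^*X[M]^+$ preserving $M$-labeled sections and fixing $X$, where $g_2$ is $g$ followed by $X[N,M]\to X[M]$. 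I expect the main obstacle to be the bookkeeping in the induction — specifically, checking that after each blow-up step the ``extra'' sections still meet neither each other nor the relative singular locus of the \emph{new} universal family, so that the hypotheses of the $|M|=1$ case remain satisfied. This is where one must use, as in the proof of Lemma \ref{forgetting}, that the proper transforms of the relevant $\Delta_{i^+}$ in the blown-up space agree with the sections there, together with the local description of $\pi_{X[n]}$ in Section \ref{local-description} to control the relative singular locus under the blow-ups along $\Delta_{\{b,r\}^+}$ and $\Delta_{r^+}$.
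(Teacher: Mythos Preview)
Your proposal is correct and follows the same route as the paper: the first statement is proved by inductively lifting one red section at a time along the chain $X[N]\leftarrow X[N]^+ = X[N,1]\leftarrow X[N,1]^+\leftarrow X[N,2]\leftarrow\cdots$ using the argument of \S\ref{operation}, and the second statement is deduced directly from Lemma \ref{XnmUniv}. One small clarification: in your base case $|M|=1$ the blowup centers $\Delta_{\{b,r\}^+}$ and $\Delta_{r^+}$ are vacuous (there are no old red labels $r$), so in fact $X[n,1]=X[n]^+$ and the lift $g$ is simply $\sigma_{1_r}$ followed by the projection to $X[N]^+$.
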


\begin{proof}
The proof of the first statement follows from the inductive use of
the argument similar to the one given in \S\ref{operation}.
\[ \xymatrix{
S  \ar[d]_{g_1} \ar[rd]_{\sigma _{1_r}} \ar@{-->}[rrd] \ar[rrrd] \ar@{-->}[rrrrd]  &  &  &      &  \\
X[N] & X[N]^+ \ar[l] & X[N,1] \ar[l] & X[N,1]^+ \ar[l] & X[N,2] \ar[l] \ \  \cdots
}\]
The second statement is an
immediate consequence of Lemma \ref{XnmUniv}.
\end{proof}

\subsection{Definition of $\X [n]$}
Let $\X[n]$ denote the stack of FM spaces of level $n$ over
$(\mathrm{Sch}/\bk )$.
It follows from Propositions \ref{univ} and \ref{PairOper} that
$\X[n]\cong [ X[n:n] \rightrightarrows  X[n]]$, the stackification
of the prestack associated to a groupoid scheme
\[ X[n:n] \mathrel{\operatornamewithlimits{\rightrightarrows}_s^t} X[n].\]
The groupoid scheme is equipped with obvious maps $s$ and $t$,
diagonal map $e:
X[n]\ra X[n:n]$, ``composition'' $m: X[n:n]\,
{}_{t}\!\times _{s} X[n:n] \ra X[n:n]$, and exchange
map $i: X[n:n]\ra X[n:n]$. The stack is algebraic and smooth by
Proposition 4.3.1 in \cite{LM}, which requires that
$s$ and $t$ are smooth and the relative diagonal $(s,t): X[n:n]\ra X[n]\times
X[n]$ is separated and quasi-compact; these conditions are easily checked.

There is also a variant $X[n:n]_m$ of $X[n:n]$ for $m\le n$,
defined as a subscheme by the condition
$\sigma_{i_b}=\sigma_{i_r}$ for $i=1$, $\ldots$, $m$, or
alternatively by a blowup construction.
This leads to the stack
$\X[n]_m\cong [X[n:n]_m\rightrightarrows X[n]]$ of
FM spaces with $m$ sections, locally defined by the given $m$ sections
plus $n-m$ additional sections.
There is the forgetful morphism $\X[n]_m\to X[m]$.

In the particular case $n=m+1$, there is another variant
$\X[m+1]'_m$ of FM spaces with $m$ sections and marked component,
locally defined by the given sections plus one more lying on the
marked component.
As a groupoid,
$\X[m+1]'_m\cong [X[m+1:m+1]'_m\rightrightarrows X[m+1]]$
where $X[m+1:m+1]'_m$ is the open subscheme of
$X[m+1:m+1]_m$ where the black and red $(m+1)$-st sections lie on the
same component.

There is, further, the open substack $\X[m+1]''_m$ of $\X[m+1]'_m$ where
the marked component is that of the $m$-th section.
It is also an open substack of $\X[m+1]_m$, and if we define
$X[m+1]''$ to be the locus in $X[m+1]$ where the $m$-th and
$(m+1)$-st sections lie on the same component,
with $X[m+1:m+1]''_m$ the common pre-image in $X[m+1:m+1]'_m$, then
$\X[m+1]''_m\cong [X[m+1:m+1]''_m\rightrightarrows X[m+1]'']$.
A further open substack is
$X[m]\cong [X[m+1:m+1]'''_m\rightrightarrows X[m+1]''']$
where the FM space with $m$ sections is itself stable.
Here $X[m+1]'''$ is the complement in $X[m+1]''$ of the
divisor $\Delta_{m,m+1}$ where the $m$-th and $(m+1)$-st sections have come together.
We will also use the forgetful morphism
$\X[m+1]'_m\to \X[2]_1$ forgetting the first $m-1$ sections, given via
groupoids as
\[ [X[m+1:m+1]'_m\rightrightarrows X[m+1]]\to
[X[2:2]_1\rightrightarrows X[2]], \]
as well as the isomorphism $\X[m+1]''_m\cong X[m]\times [\A^1/\mathbb{G}_m]$
given by forgetful morphism (first factor) and divisor
$\Delta_{m,m+1}\subset X[m+1]''$ mentioned above (second factor).
The multiplicative group $\mathbb{G}_m$ acts in the standard way on $\A^1$ with quotient stack
$[\A^1/\mathbb{G}_m]$ parametrizing pairs consisting of a line bundle
with a regular section;
note that an effective Cartier divisor canonically determines such a pair.

\section{The Stack of Stable Ramified Maps}

\subsection{Stable ramified maps} We introduce a generalized notion of
stable maps. For the basic definitions and properties of stable
maps, the reader may see, for example, \cite{FP}. From now on, we assume that $X$ is a
nonsingular projective variety over $\bk$. Let \[ NE_1(X)\subset
A_1(X )/\sim _{\mathrm{alg}}\] denote the semigroup of effective curve classes modulo algebraic equivalences.
Given $\beta \in NE_1(X)$, $g,n\in \ZZ _{\ge 0}$, and $\mu =(\mu
_1,\ldots,\mu _n)$, $\mu _i\in \ZZ _{\ge 1}$, we define:

\begin{Def}\label{DefPmap} A triple \[((C, p_1,\ldots,p_n), \pi _{W/X}: W\ra X, f:C\ra
W)\]
      is called a {\em stable map
       with $\mu$-ramification from an $n$-pointed, genus $g$ curve to an FM degeneration space $W$ of
       $X$, representing class
       $\beta$}
       (for short, {\em a $(g,\beta,\mu)$-stable ramified map})
       if:

\begin{itemize}
       \item $(C,p_1,\ldots,p_n)$ is an $n$-pointed, genus $g$
       prestable curve over $\bk$.

       \item $\pi _{W/X}: W\ra X$ is an FM degeneration of $X$ over
       $\bk$.

       \item  The pushforward $(\pi _X\circ f)_*[C]$ of the fundamental class $[C]$ is
       $\beta$.

        \item The following four conditions are satisfied:
\end{itemize}

\begin{enumerate}

       \item {\em Prescribed Ramification Index Condition:}
     \begin{itemize}
       \item The smooth locus $C^{\mathrm{sm}}$ of $C$ coincides with
       the inverse image 
       $f^{-1}(W^{\mathrm{sm}})$ of the smooth locus $W^{\mathrm{sm}}$.
       \item
       $f|_{C^{\mathrm{sm}}}$ is unramified everywhere possibly except at
       $p_i$.
       \item At $p_i$
       the ramification index
       \[ \mathrm{length}(\mathfrak{m}_{p_i} /\mathfrak{m}_{f(p_i)}\mathcal{O}_{p_i}) +1\]
       of $f$ is exactly $\mu _i$.
       \end{itemize}

       \item {\em Distinct Points Condition:} $f(p_i)$, $i=1$, $\ldots$, $n$ are pairwise distinct
       points of $W$.

       \item {\em The Admissibility
       Condition:}
       At every nodal point $p$ of $C$, there are identifications
       $\widehat{\mathcal{O}}_{f(p)}\cong \bk[[z_1,\ldots,z_{r+1}]]/(z_1z_2)$
       and $\widehat{\mathcal{O}}_p\cong \bk[[x,y]]/(xy)$, so that
       $\hat f^*$ sends $z_1$ to $x^m$ and $z_2$ to $y^m$ for some
       positive integer $m$.

       \item {\em Stability Condition:}
        \begin{itemize} \item For each ruled component $W_r$ of $W$, there
       is either an image of a marking in $W_r$ (that is, $f(p_i)\in W_r$ for
       some $i$) or a non-fiber image $f(D)\subset W_r$ of an
       irreducible component $D$ of $C$.
       \item For each end component $W_e\cong \PP ^r$ of $W$, there are either
       images of two distinct markings in $W_e$ or a non-line image
       $f(D)\subset W_e$ of an irreducible component $D$ of $C$.
       \end{itemize}

\end{enumerate}

\end{Def}

\begin{Lemma}\label{boundedness} Let $W$ be a target
of a $(g,\beta,\mu)$-stable ramified map $f:C\ra W$. Then the
number of components of $W$ is bounded above by an integer
depending only on $X$, $\beta$, $g$ and $n$. So is the number of
components of $C$.
\end{Lemma}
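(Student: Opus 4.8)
The plan is to bound the two quantities separately, starting with the number of components of $W$. The key geometric input is the Projective Tangent Bundle Map from Section~\ref{local-description}: on each end component $W_e\cong\PP^r$ and each ruled component $W_r$, the composite $\pi_{W/X}$ has a well-understood structure, and a curve mapping into such a screen either maps to a fiber of the blow-down (hence is contracted by one step toward the root) or has strictly positive intersection with $D_+$. First I would observe that the degree $\beta$, together with the choice of a very ample line bundle $H$ on $X$, controls the total intersection of $f_*[C]$ with the pullback $\pi_X^*H$; and on each screen there is an ample class (a combination of the pullback of $\mathcal{O}_{\PP^r}(1)$ under the identification with a blowup of $\PP^r$ and the exceptional and $D_+$-classes) whose positivity forces that any non-contracted curve component contributes at least a fixed positive amount to a suitable degree functional that is itself bounded by a function of $\beta$ and the geometry of $X$.

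Next I would turn the Stability Condition into a counting argument. Each ruled screen $W_r$ carries either a marked point — of which there are only $n$ — or the non-fiber image of a component $D\subset C$; each end screen $W_e$ carries either two markings or a non-line image of some $D$. So the number of screens that are ``witnessed'' by markings is at most a function of $n$, and the number witnessed by non-contracted curve components is at most the number of irreducible components of $C$ that are not contracted onto a lower-level screen, which in turn is bounded by the total degree via the previous paragraph. Since the dual tree of $W$ has bounded valence constraints (root plus screens, each screen meeting its parent along a single $D_+$), a screen with no witness of its own must be an ancestor of one that has a witness, i.e.\ it lies on a minimal chain from the root to a witnessed screen; the number of such ancestor-screens is bounded by (number of witnessed screens) times (maximal level). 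Thus it remains to bound the level, which follows because passing from a screen to the next level strictly drops a numerical invariant — e.g.\ the self-intersection data of $D_+$, or more simply: along the chain the pullback of $\pi_X^*H$ to successive screens forces finitely many blow-ups, again controlled by $\beta$. Assembling these inequalities gives the bound on the number of components of $W$ depending only on $X$, $\beta$, $g$, $n$ (here $g$ enters only through $n$ and the arithmetic genus, which constrains how many contracted components $C$ can have).

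Finally, for the number of components of $C$: by the Admissibility Condition $f$ is finite on every component of $C$, so no component is contracted to a point; an irreducible component $D\subset C$ either has $f(D)$ a non-fiber (equivalently, non-contracted) curve in some screen or in the root $\mathrm{Bl}_pX$, contributing positively to the bounded degree functional above, or $f(D)$ is a fiber/line of a screen, in which case — because $C$ is prestable of arithmetic genus $g$ with $n$ marked points and $W$ now has a bounded number of screens each of bounded geometry — there are only boundedly many such ``rational tail'' or ``rational bridge'' components that can be attached without violating the genus count or creating free automorphisms excluded by stability. Concretely, the non-contracted components number at most the bounded degree, the contracted ones number at most a function of that plus $g$ and $n$ by the usual stable-curve combinatorics, and the sum is the desired bound.

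\emph{Main obstacle.} The delicate point is bounding the \emph{level} (depth of the tree) of $W$, i.e.\ ruling out arbitrarily long chains of ruled screens each carrying a single low-degree curve: one must exhibit a numerical quantity that both is bounded in terms of $\beta$ and strictly decreases (or increases by a fixed amount) along each edge of the tree toward the root, using the explicit blow-up description of $X[n]^+$ and the Projective Tangent Bundle Map to control how a curve's class transforms under the successive blow-downs.
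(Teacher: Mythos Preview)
Your outline has the right architecture---bound the screens of $W$ via the Stability Condition and a degree functional, then deduce a bound on the components of $C$---but two steps are not actually carried out, and one of them relies on a false premise.

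\medskip

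\textbf{The level bound.} You correctly flag this as the main obstacle, but your proposed mechanisms (self-intersection of $D_+$, or ``how a curve's class transforms under successive blow-downs'') do not give a quantity that strictly decreases along the tree. The paper's device is more concrete: embed $X$ projectively so that $\beta$ has degree $d$, and show that for a screen at level $k$ the sum
\[
(\text{total degree of }f(C)\text{ in that screen, measured in }\PP^r) \;+\; (\text{number of marked points landing in that screen})
\]
is at most $d+n-k+1$. The drop by one per level comes from combining AC (degree entering a child screen through $D_+$ is bounded by the degree available in the parent) with SC (the parent, being ruled, already uses up either one marking or at least one unit of degree via its non-fiber witness). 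Since this quantity is positive, the level is at most $d+n$. The paper initializes this with a general projection $X\dashrightarrow\PP^1$ and the Riemann--Hurwitz bound $|W_1|\le 2d-2+2g+2n$, then propagates multiplicatively. Your sketch never isolates this decreasing quantity, and the alternatives you name do not decrease: the class $\pi_X^*H$ restricts trivially to every screen, and the $D_+$-self-intersection is the same on every ruled component.

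\medskip

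\textbf{The bound on components of $C$.} You write that components of $C$ whose image is a fiber or a line are ``rational tails or bridges'' whose number is bounded ``without violating the genus count or creating free automorphisms excluded by stability.'' But there is no stability condition on $(C,p_1,\ldots,p_n)$ as a pointed curve; SC constrains only the screens of $W$. So an appeal to automorphism-freeness of $C$ is unfounded. The paper's argument here is a one-liner once the per-screen degree bound is in hand: since $f$ is finite on every component (PRIC plus AC forbid contraction), each component contributes at least $1$ to the degree in its screen, so each screen supports at most $d$ components of $C$; multiplying by the bound on screens finishes.
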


\begin{proof}
First assume that $W=X$ is the projective line $\PP ^1_{\bk}$, $C$ is a genus $g$
nodal curve not necessarily connected, $d$ is a positive integer, and $f:C\ra W$ is a degree
$d$ map whose restriction to each connected component is stable.
Then the sum of the number of nodal points of $C$,
ramification points of $f$, and marked
points is less than or equal to $2d-2+2g+2n$.

Now we come to the general case that $W$ is an FM-degeneration
space of a nonsingular projective variety $X$. Choose an embedding
$X\hookrightarrow \PP ^N_{\bk}$. Let $W_k$, $W_{\ge k}$ be the set of all level-$k$
resp.\ level-$k$-and-higher screens,
$C_{\ge k}$ the pre-image of $W_{\ge k}$ under $f$,
and $C_k$ the stabilization of $C_{\ge k}\to W_{\ge k}\to W_k$.
By considering a general
projection to $\PP^1_{\bk}$ such that the composite
$C_0\to W\ra X \dashrightarrow \PP ^1_{\bk}$ is well-defined, stable, and
collapses no components not already collapsed by the map to $X$,
we see that $|W_1|  \le 2d-2+2g+2n$,
where $d$ is the degree of $\beta$. Note that the total degree of $C$ in {\em
each} level-one screen (understood as degree in $\PP^r_{\bk}$ after blowdown) is less
than or equal to $d$. By the Admissibility Condition and the Stability
Condition, the sum of the total degree and the number of marked
points of $C$ in {\em each} level-two screen is less than or equal
to $d+n-1$. By the same reasoning, the sum of the total degree and the number
of marked points of $C$ in a level-$k$ screen is less than or
equal to $d+n-k+1$.
Therefore the level of any screen can be at most $d+n$.
This fact combined with the bound
$|W_k|\le 2(d-1+g+n)\prod_{1\le i\le k-1} 2(d+g+n-i)$ leads to a bound
on the total number of components of $W$, and since each
component supports the image of at most $d$ components of $C$,
also a bound on the number of components of $C$.
\end{proof}

\subsection{The families of stable ramified maps}
Given $\beta \in NE_1(X)$, $g,n\in \ZZ _{\ge 0}$, and $\mu =(\mu
_1,\ldots,\mu _n)$ where $\mu _i\in \ZZ _{\ge 1}$, we define a family
version of  stable ramified maps.

\begin{Def}\label{definition} A triple \[ ((\pi :\mathcal{C}\ra S, \{ p_1,\ldots,p_n\}),
(\pi _{\mathcal{W}/S}: \mathcal{W} \ra S, \pi _{\mathcal{W}/X}:
\mathcal{W}\ra X), f:\mathcal{C}\ra \mathcal{W})\] is called an
{\em $S$-family of stable maps with $\mu$-ramification from $n$-pointed,
       genus $g$ curves to an FM degeneration space $\mathcal{W}$ of
       $X$, representing class $\beta$} if:

\begin{itemize}
       \item $(\pi :\mathcal{C}\ra S, \{ p_1,\ldots,p_n\})$ is a family of $n$-pointed, genus $g$
       prestable curves over $S$.

       \item $(\pi _{\mathcal{W}/S}: \mathcal{W} \ra S,
       \pi _{\mathcal{W}/X}: \mathcal{W}\ra X)$ is an FM degeneration of $X$ over
       $S$.

       \item  The data form a commutative
       diagram
       \[\xymatrix{
       \mathcal{C} \ar[rr]^f \ar[dr]_{\pi} &&
       \mathcal{W} \ar[r]^{\pi_{\mathcal{W}/X}} \ar[ld]^{\pi_{\mathcal{W}/S}} &
       X\\ & S }
       \]
         such that over each geometric point of $S$,
         it is a $(g,  \beta,\mu  )$-stable ramified map.

       \item {\em Prescribed Ramification Index Condition:} $f$ has ramification index $\mu _i$ at $p_i$.

       \item {\em The Admissibility
       Condition:}  For any geometric point $t$ of  $S$, if $p$ is a nodal point of $C_t$ and two isomorphisms are given as
        \[ \widehat{\mathcal{O}}_{f(p)}
       \cong \widehat{\mathcal{O}}_{\pi _S (p)} [[z_1,\ldots,z_{r+1}]]/(z_1z_2-s), \text{ for some }   s\in \widehat{\mathcal{O}}_{\pi _S(p)}\] and 
         \[ \widehat{\mathcal{O}}_{p}\cong\widehat{\mathcal{O}}_{\pi_S (p)}[[x,y]]/(xy-s'), \text{ for some } s'\in \widehat{\mathcal{O}}_{\pi _S(p)}\] 
      then
             \[\hat{f}^*(z_1)= \alpha _1 x^m,\ \hat{f}^*(z_2) = \alpha _2 y^m \]
       for some
       units $\alpha _i$ in $\widehat{\mathcal{O}}_{p}$
       with $\alpha _1\alpha _2 \in  \widehat{\mathcal{O}}_{\pi _S(p)} $
        and a positive integer $m$.%  (no restriction on $z_i$ for $i>2$).
\end{itemize}
\end{Def}

\begin{Rmk} \label{RmkRam} Let $\mathcal{C}$ and $\mathcal{W}$ be as in above.
Then we say that $f:\mathcal{C}\ra \mathcal{W}$ has ramification
index $\ell$ at a smooth point $p: S\ra \mathcal{C}$ if
we have equality of sheaves of ideals
\[ f^*(\mathcal{I}
_{f(p(S))})\widehat{\mathcal{O}}_{p(s)} = \mathcal{I}_{p(S)}^\ell\widehat{\mathcal{O}}_{p(s)}
\]
for all $s\in S$.
\end{Rmk}

\begin{Rmk} Admissibility Condition,
which is called Predeformability Condition in
\cite{J.Li}, was introduced and studied by J. Li in his
construction of stable relative map and relative Gromov-Witten
invariants. When the target is one-dimensional, the notion of
admissibility was introduced in \cite{HM} and well studied in
\cite{Mo} by log structures (\cite{Kato});
the analogous log structures in case
of higher-dimensional target are studied in \cite{Olsson2}. As explained in \S 3.7 in \cite{Mo} and
in Simplification 1.7 in \cite{J.Li2},
we may let $\alpha _1=\alpha _2 =1$ in Definition \ref{definition} for a suitable isomorphism 
$\widehat{\mathcal{O}}_{p}\cong \widehat{\mathcal{O}}_{\pi_S(p)}[[x,y]]/(xy-s')$.
\end{Rmk}

From now on, we use abbreviations of the imposed conditions on
stable ramified maps, for example, PRIC for
Prescribed Ramification Index Condition.

\begin{Lemma}\label{TLMC}{\em (Tangent Line Map Condition)}
Let $f$ be a family of stable ramified maps as in Definition
\ref{definition}.
PRIC and AC together imply that there is a natural extension
\[ \PP (Tf):\mathcal{C} \rightarrow
   \PP(TX)\] of the projectivization
       of the induced map $T(\mathcal{C}/S)^{\mathrm{sm}}\ra T(\mathcal{W}/S)^{\mathrm{sm}}$ between tangent bundles.
\end{Lemma}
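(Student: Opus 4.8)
The plan is to construct $\PP(Tf)$ by patching together the local description on the smooth locus with an explicit extension across ramification and nodal points, using PRIC and AC to control the behavior there. On $\mathcal{C}^{\mathrm{sm}}\cap f^{-1}(\mathcal{W}^{\mathrm{sm}})$ — which by PRIC is all of $\mathcal{C}^{\mathrm{sm}}$ — the differential $df$ gives a map $T(\mathcal{C}/S)^{\mathrm{sm}}\to f^*T(\mathcal{W}/S)^{\mathrm{sm}}$ of line-bundle-to-rank-$r$-bundle over the open locus where $f$ is unramified; there it is nowhere zero, so it projectivizes to a morphism to $\PP(f^*T(\mathcal{W}/S)^{\mathrm{sm}})$, and composing with the Projective Tangent Bundle Map $\PP(T(\mathcal{W}/S)^{\mathrm{sm}})\to \PP(TX)$ recalled in \S\ref{local-description} yields a morphism to $\PP(TX)$ away from the markings and nodes. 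So the content is to extend over (a) the marked sections $p_i$, where $f$ is ramified of index $\mu_i$, and (b) the nodal locus of $\mathcal{C}$.

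First I would treat the ramification at a section $p_i$. Work in a formal (or étale-local) neighborhood; since $p_i$ lands in $\mathcal{W}^{\mathrm{sm}}$ (PRIC forces $p_i\in C^{\mathrm{sm}}$, so its image is a smooth point of $\mathcal{W}$) we may choose a local coordinate $x$ on $\mathcal{C}$ along $p_i(S)$ and coordinates $w_1,\dots,w_r$ on $\mathcal{W}$ at $f(p_i)$ with $f(p_i)$ cut out by $(w_1,\dots,w_r)$. By the ramification-index condition (cf.\ Remark \ref{RmkRam}), $f^*\mathcal{I}_{f(p_i(S))}=\mathcal{I}_{p_i(S)}^{\mu_i}$ fiberwise, so after adjusting coordinates $\hat f^*w_j = x^{\mu_i}u_j$ with the $u_j\in \widehat{\mathcal{O}}$ not all vanishing along $x=0$ (their common vanishing would violate the length computation in PRIC). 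Then $\PP(Tf)$ is defined along $x=0$ by the $r$-tuple $(u_1,\dots,u_r)|_{x=0}$, i.e.\ by the "leading term" of $f$; this is a well-defined point of $\PP(T_{f(p_i)}\mathcal{W})\cong \PP(T_{\pi_X(f(p_i))}X)$, and it visibly glues with the morphism already defined on the punctured neighborhood. One should check the construction is independent of the chosen coordinates — this is a standard leading-coefficient argument — and that it is a morphism in families (the $u_j$ vary algebraically).

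Next, the node: let $p$ be a point of the relative singular locus, with local model $\widehat{\mathcal{O}}_p\cong \widehat{\mathcal{O}}_{\pi_S(p)}[[x,y]]/(xy-s')$ and $\widehat{\mathcal{O}}_{f(p)}\cong \widehat{\mathcal{O}}_{\pi_S(p)}[[z_1,\dots,z_{r+1}]]/(z_1z_2-s)$ with $\hat f^*z_1=\alpha_1 x^m$, $\hat f^*z_2=\alpha_2 y^m$ as in AC. On the branch $y=0$ the point is a smooth point of $\mathcal{C}$ mapping into the smooth locus of the relevant screen, where the map is unramified in the $z_1$-direction, so $\PP(Tf)$ on that branch picks out the $z_1$-direction (up to the $z_1z_3,\dots,z_1z_{r+1}$ coordinates on the fiber, exactly as in the local description of $\pi_{X[n]}$), and symmetrically on $x=0$ it picks out the $z_2$-direction; the two prescriptions agree because both give the tangent direction to the (image of the) node inside $\mathcal{W}$, which under the Projective Tangent Bundle Map is a single well-defined point of $\PP(TX)$. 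Thus $\PP(Tf)$ extends continuously, hence (by normality of $\mathcal{C}$ along the node, or by the explicit formulas) as a morphism, across the nodal locus.

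Finally I would assemble these local extensions: they agree with the morphism on the smooth-unramified locus and with each other on overlaps, so by descent they glue to a morphism $\PP(Tf)\colon \mathcal{C}\to \PP(TX)$ defined on all of $\mathcal{C}$; functoriality in $S$ is immediate from the local formulas. The main obstacle I anticipate is (b), checking consistency across the node in the higher-dimensional case: one must verify that the "$z_1$-direction" read off from one branch and the "$z_2$-direction" from the other branch really define the \emph{same} point of $\PP(TX)$ after applying the Projective Tangent Bundle Map, and that the ambiguity coming from the $z_1 z_3,\dots,z_1 z_{r+1}$ versus $z_2 z_3,\dots$ fiber coordinates is harmless — this is where the precise form of the local model of $\pi_{X[n]}$ in \S\ref{local-description}, together with the constraint $\alpha_1\alpha_2\in\widehat{\mathcal{O}}_{\pi_S(p)}$ from AC, does the real work.
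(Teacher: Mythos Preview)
Your overall shape matches the paper's---treat the smooth locus via PRIC, the nodal locus via AC, and compose with the Projective Tangent Bundle Map of \S\ref{local-description}---and your argument at the sections $p_i$ is correct and equivalent to the paper's (which phrases it as surjectivity of $f^*\Omega_{\mathcal{W}/S}\to\Omega_{\mathcal{C}/S}(-\sum(\mu_i-1)p_i)$ on the smooth locus). The genuine gap is at the node, in families. You check only that the limits along the two branches $y=0$ and $x=0$ of the \emph{central fiber} agree, and then invoke ``normality of $\mathcal{C}$ along the node.'' But $A[[x,y]]/(xy-s')$ is not normal when $s'=0$; and when, say, $A=\bk[[t]]$ with $s'=t$, so that the local ring is the regular ring $\bk[[x,y]]$, the nodal point is approached from many directions besides the two coordinate branches, and normality does not extend rational maps to projective targets across points. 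Concretely, $[x^2+y^2:xy]$ on $\A^2\setminus\{0\}$ restricts to the constant $[1:0]$ on both branches of $xy=0$, yet is genuinely indeterminate at the origin of $\A^2$. So neither branch-limit agreement nor normality yields a morphism on the total family.

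The paper closes the gap by an explicit computation rather than any general extension principle. Using the identification of the fiber $F_k$ with $X$ and its coordinates $z_1,z_1z_3,\ldots,z_1z_{r+1}$ from \S\ref{local-description}, one finds
\[
\PP(Tf)=\Big[\,m:\;mz_3+x\tfrac{\partial z_3}{\partial x}:\,\cdots\,:\;mz_{r+1}+x\tfrac{\partial z_{r+1}}{\partial x}\,\Big]\qquad(x\ne 0),
\]
and the same with $-y\,\partial/\partial y$ in place of $x\,\partial/\partial x$ for $y\ne 0$. The two patch because $x\,\partial/\partial x-y\,\partial/\partial y$ annihilates $xy-s'$ and so is a single regular derivation on $A[[x,y]]/(xy-s')$; and the formula is regular across the node because the first entry is the nonzero constant $m$. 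That nonvanishing first coordinate is the decisive point---it is exactly what your ``explicit formulas'' alternative would have to produce---and it comes from the shape of the local model of $\pi_{X[n]}$, not from the condition $\alpha_1\alpha_2\in\widehat{\mathcal{O}}_{\pi_S(p)}$.
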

\begin{proof} This is a local property. PRIC (resp.\ AC)
will imply that $\PP (Tf)$ is well-defined at smooth points (resp.\ at
singular points) of $\pi : \mathcal{C}\ra S$. First, at an
smooth point $p$ of $\mathcal{C}/S$, let $x$ be a $S$-relative
uniformizing parameter, locally defining the section $p_i$ if $p=p_i$ for
some $i$. Since the image of $p$ is a smooth point
$q$, we have also $t_1,\ldots,t_r$, $S$-relative uniformizing
parameters at $q$. The submodule $(f^*dt_1,\ldots,f^*dt_r)$ of the stalk
of $\Omega _{\mathcal{C}/S}$ at $p$ is
generated by $x^{m-1}dx$ if $m$ is the ramification index of $f$ at
$p$. Hence
$$[x^{-(m-1)}f^*dt_1,\ldots,x^{-(m-1)}f^*dt_r]$$
is regular at $p$ since there is no jump of the ramification indices at $p$.
In other words,
$f^*\Omega _{\mathcal{W}/S}\ra \Omega_{\mathcal{C}/S}(-\sum (m_i-1)p_i)$
is surjective on the smooth locus of $\mathcal{C}/S$.

At a singular point of $\mathcal{C}/S$, the map $f$ satisfies AC:
\[  A [[z_1,\ldots,z_{r+1}]]/(z_1z_2-s) \ra A[[x,y]]/(xy-s') \]
sending $z_1\mapsto \alpha _1 x^m,\ z_2\mapsto \alpha _2 y^m$, where
$A=\widehat{\mathcal{O}}_{\pi (p)}$. Using the local description
of $\pi _{X[n]}$ and the projective tangent map given in \S\ref{local-description}, we may assume that $W=Y_{k+1}$
and $F_k=X$;
and we compute that
$\PP (Tf)$ is
$$[m,mz_3+x\frac{\partial z_3}{\partial x},\ldots,mz_{r+1}
+x\frac{\partial z_{r+1}}{\partial x}]$$
for $x\ne 0$ and
$$[m,mz_3-y\frac{\partial z_3}{\partial y},\ldots,mz_{r+1}
-y\frac{\partial z_{r+1}}{\partial x}]$$
for $y\ne 0$.
Since $x\,\partial/\partial x=-y\,\partial/\partial y$ is a regular
derivation of $A[[x,y]]/(xy-s')$, it follows that
the rational map $\PP Tf: \mathcal{C}\dashrightarrow \PP TX$  is
well-defined also at nodal points.
\end{proof}

\subsection{Construction of $\overline{\mathfrak{U}}_{g,\mu}(X,\beta )$}
Define a category $\overline{\mathfrak{U}}_{g,\mu}(X,\beta)$
of $(g, \beta , \mu)$-stable ramified maps to FM degenerations of $X$; it
is a CFG over the \'etale site $(\mathrm{Sch}/\bk )$. A morphism
is a commutative diagram
\[
\xymatrix@R=14pt{
\mathcal{C}'  \ar[rr]^{f'}\ar[dr]  \ar[dd] && \mathcal{W}' \ar[dd]\ar[ld]\ar[r]&
X \ar@{=}[dd] \\
& S'  \ar[dd] \\
\mathcal{C}  \ar'[r][rr]^(.3){f} \ar[dr]  &&  \mathcal{W} \ar[ld]\ar[r] &X \\
& S
}\]
preserving markings, where the squares to each side of $S'\ra S$ are cartesian.
It is straightforward to see that this CFG is a stack.

\begin{Def}\label{W/T}
An $S$-family of {\em stable $\mu$-ramified maps to a fixed target}
$X[\ell]^+/X[\ell]$ from $n$-pointed genus $g$ curves is
a stable $n$-pointed genus $g$ map
$(\cC\ra S,\{ p_1,\ldots,p_n\}, f:\cC\ra X[\ell]^+)$ with a
commutative diagram
\[
\xymatrix{
\cC \ar[r]^(0.42)f\ar[d] \ar[d] & X[\ell]^+\ar[d] \\
S \ar[r] & X[\ell]
}\]
satisfying PRIC, DPC,  AC, and SC.
\end{Def}

Let $\beta$ be a curve class in $NE_1(X)$ and denote also by
$\beta$, the induced class in $NE_1(X[\ell]^+ )$ using any
canonical inclusion $X\subset X[\ell]^+$ as a general fiber of
$X[\ell]^+\ra X[\ell]$.

\begin{Prop}\label{AC}
The stack $\overline{\mathcal{M}}_{g,n}(X[\ell]^+/X[\ell],\beta  )^\mu$ of
stable $\mu$-ramified maps to a fixed target $X[\ell]^+/X[\ell]$ from
$n$-pointed genus $g$ curves, representing class $\beta $, is a
separated finite-type Deligne-Mumford stack over $\bk$.
\end{Prop}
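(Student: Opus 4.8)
The plan is to realize $\overline{\mathcal{M}}_{g,n}(X[\ell]^+/X[\ell],\beta)^\mu$ as a locally closed substack of a space that is already known to be a separated Deligne--Mumford stack of finite type, namely Kontsevich's stack $\overline{\mathcal{M}}_{g,n}(X[\ell]^+,\beta)$ of stable maps to the (projective) variety $X[\ell]^+$. Recall that $X[\ell]^+$ is a nonsingular projective variety, so this stack is a proper Deligne--Mumford stack of finite type over $\bk$ by the standard theory (\cite{FP}, \cite{BF}). I would first observe that there is a natural morphism $\overline{\mathcal{M}}_{g,n}(X[\ell]^+/X[\ell],\beta)^\mu\to \overline{\mathcal{M}}_{g,n}(X[\ell]^+,\beta)$, forgetting the map to $X[\ell]$, and that the data of the lift $\cC\to S\to X[\ell]$ compatible with $f$ is in fact determined by $f$ (the composite $\cC\to X[\ell]^+\to X[\ell]$ is constant on the fibers of $\cC\to S$, giving $S\to X[\ell]$); so this morphism is a monomorphism. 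Hence it suffices to show that the conditions PRIC, DPC, AC, SC, together with the requirement that the composite $\cC\to X[\ell]^+$ have image landing fiberwise in a single fiber of $\pi_{X[\ell]}$, cut out a locally closed substack of finite type, and that the resulting substack is Deligne--Mumford and separated.

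Next I would check each condition is (locally) closed or open as appropriate. The condition that the image of each fiber $\cC_s$ lie in a single fiber of $X[\ell]^+\to X[\ell]$ is closed (it says a certain composite $\cC\to X[\ell]$ factors through $S$, equivalently a map of sheaves vanishes). PRIC has two parts: the set-theoretic condition $C^{\mathrm{sm}}=f^{-1}(W^{\mathrm{sm}})$ together with unramifiedness on $C^{\mathrm{sm}}$ away from the $p_i$ is an open condition (the non-AC nodes and the extra ramification points form a closed locus whose emptiness is open), while the exact value $\mu_i$ of the ramification index at $p_i$ is locally closed, being the locus where the coherent sheaf $f^*\Omega_{W/X}\to\Omega_{\cC/S}$ drops rank to a prescribed order along $p_i(S)$; here one uses the explicit local form in the proof of Lemma \ref{TLMC}. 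The Admissibility Condition AC at the nodes is a closed condition: it says that, at each node, both branches of $f$ have the same ramification order $m$ and that the two local equations $z_1,z_2$ pull back to $x^m,y^m$ up to units with $\alpha_1\alpha_2$ descending to the base; this is the predeformability condition of \cite{J.Li}, \cite{Olsson2}, and is closed in the stable-map stack. DPC (the $f(p_i)$ pairwise distinct) is open. SC (stability of screens) is open, since it rules out certain contracted or low-degree configurations over the screens. Intersecting, we get a locally closed, hence finite-type, substack.

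It remains to see that the substack so obtained is Deligne--Mumford and separated. Both are inherited from $\overline{\mathcal{M}}_{g,n}(X[\ell]^+,\beta)$: a locally closed substack of a separated Deligne--Mumford stack of finite type is again such. Concretely, the diagonal of $\overline{\mathcal{M}}_{g,n}(X[\ell]^+,\beta)$ is finite and unramified, and restricting to any substack only makes the diagonal a base change of this, so it stays finite and unramified; and finiteness (in particular separatedness) of the diagonal is what makes it Deligne--Mumford with finite automorphism groups — which also matches the fact that stable ramified maps have no infinitesimal automorphisms, since the stability of $(\cC,p_i)$ as a prestable curve together with SC on the screens kills all automorphisms of the source and target fixing the data. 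The main obstacle I anticipate is verifying carefully that AC is genuinely a closed condition in families — i.e.\ that "predeformability at the nodes'' is represented by a closed subscheme of the base, with the subtlety that one must simultaneously pin down the integer $m$ at each node and handle the unit ambiguity in Definition \ref{definition}; this is exactly the point addressed by J. Li and by Olsson via log structures, and I would invoke \cite{J.Li}, \cite{J.Li2}, \cite{Olsson2} rather than reprove it. Everything else is a routine application of standard openness/closedness statements for loci defined by ranks of maps of coherent sheaves and by incidence conditions among sections.
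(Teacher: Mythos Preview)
Your proposal is correct and follows essentially the same approach as the paper: both realize $\overline{\mathcal{M}}_{g,n}(X[\ell]^+/X[\ell],\beta)^\mu$ as a locally closed substack of the ordinary Kontsevich space $\overline{\mathcal{M}}_{g,n}(X[\ell]^+,\beta)$ by checking that each of DPC, AC, PRIC, and SC is an open or closed condition, citing J.~Li (and Mochizuki) for the closedness of predeformability. The paper organizes this as a chain $S\supset S_1\supset\cdots\supset S_5\supset Z$ (open for DPC and no-component-into-singular-locus, closed for AC, open for log-unramifiedness, closed for ramification $\ge\mu_i$, open for ramification $=\mu_i$, open for SC), which matches your outline; one small simplification is that your extra ``fiberwise-in-a-single-fiber'' condition is automatic, since $\beta$ is by definition a fiber class in $X[\ell]^+$ so any connected curve of class $\beta$ already maps to a point in $X[\ell]$.
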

\begin{proof} Since the moduli of $(g,\beta)$-stable maps to $X[\ell]^+$ is a proper
Deligne-Mumford stack, it is enough to show that, given a family $f$ of stable
maps over $S$, representing class $\beta$, there is a locally closed subscheme
$Z$ of $S$ such that: for any $T\ra S$, the pullback $f|_T$ is a
$T$-family of stable ramified maps (with fixed target
$X[\ell]^+/X[\ell]$) if and only if $T\ra S$ factors through $Z$.

First, take the maximal open locus $S_1$ of $S$ where $f$ does
not send any component of any geometric fiber of $\cC$ to the
relative singular locus of $X[\ell]^+/X[\ell]$, and furthermore DPC
is satisfied. Then there is a natural closed subscheme $S_2$
of $S_1$ representing a functor of admissible stable maps. This
can be shown by the proof of Theorem 2.11 in \cite{J.Li} (or one
may use \S 3C in \cite{Mo}). Now take the maximal open locus $S_3$
of $S_2$ where $f^*\Omega^\dagger _{X[\ell]^+/X[\ell]} \ra \Omega^\dagger
_{\cC _{S_3}/S_3}$ is surjective, where $\Omega^\dagger
_{X[\ell]^+/X[\ell]}$ and $\Omega^\dagger _{\cC _{S_3}/S_3}$ are the
sheaves of relative log differentials induced from the log
structures of the boundary divisors.
%The boundary of $X[\ell]^+$
%consists of divisors $\Delta _T$ where $T\subset \{1,2,\ldots,\ell+1\}$,
%$|T|\ge 2$, $T\ne i^+$ ($i=1$, $\ldots$, $\ell$); the boundary of $X[\ell]$
%consists of $\Delta _T$, $T\subset \{1,2,\ldots,\ell\}$, $|T|\ge 2$; and
%the log structure of $\cC _{S_3}$ (resp.\ $S_3$) are given by
%pulling back of the canonical log structure of the moduli of
%$n+1$-pointed (resp.\ $n$-pointed) prestable curves (\cite{Mo}).
In order to take care of PRIC, introduce relative uniformizing
parameters $z_j$, $j=1$, $\ldots$, $r$ of $\mathcal{O}_{f(p_i(t))}$ at
$f(p_i(t))$ where $t$ is a point of an \'etale chart of $S_3$, and let $z$ be a
relative parameter of $\mathcal{O} _{p_i(t)}$ at $p_i(t)$. Then,
define the closed subscheme $S_{4, i}$ of $S_3$ by equations
$a_{0, j}=0$, $\ldots$, $a_{\mu_i-1, j}=0$, for all $j$, where
$f^*(z_j) = a_{0,j} + a_{1,j}z + \cdots \in \mathcal{O}_{p_i}$, $a_{k, j}\in
\mathcal{O}_{t}$.
Then take the maximal open subscheme $S_5$ of $\bigcap _i S_{4,i}$,
where restricted to every geometric fiber
$\cC _t$, $f$ has ramification order exactly $\mu _i$ at $p_i(t)$.
Now $Z$ is the maximal open subscheme of $S_5$ where SC is satisfied.
\end{proof}

\begin{Cor}
The stack $\overline{\mathfrak{U}}_{g,\mu }(X,\beta)$ is a finite-type
Deligne-Mumford stack.
\end{Cor}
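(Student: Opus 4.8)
The plan is to reduce the corollary to Proposition~\ref{AC} together with the fact (established in \S\ref{X[n]}) that $\X[n]$ is a smooth algebraic stack parametrizing FM degeneration spaces of level $n$. First I would use Lemma~\ref{boundedness}: for fixed $X$, $\beta$, $g$, $n$ the number of components of any admissible target $W$ appearing in a $(g,\beta,\mu)$-stable ramified map is bounded, so the level of $W$ is bounded by some $N=N(X,\beta,g,n)$. Hence every such target is (\'etale-locally on the base) pulled back from $X[\ell]^+\to X[\ell]$ for some $\ell\le N$, and $\overline{\mathfrak{U}}_{g,\mu}(X,\beta)$ is covered by the images of the forgetful-type maps from the relative moduli spaces $\overline{\mathcal{M}}_{g,n}(X[\ell]^+/X[\ell],\beta)^\mu$ as $\ell$ ranges over this finite set.

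The key step is to realize $\overline{\mathfrak{U}}_{g,\mu}(X,\beta)$ as relative over the stack $\X[\ell]$. Concretely, over the groupoid presentation $X[\ell:\ell]\rightrightarrows X[\ell]$ of $\X[\ell]$, a stable ramified map with target an FM space of level $\le\ell$ is the same datum as a family of stable $\mu$-ramified maps to the fixed target $X[\ell]^+/X[\ell]$, equivariantly for the groupoid action: this is exactly the content of Propositions~\ref{univ} and~\ref{PairOper}, which identify isomorphism classes of FM spaces with points of $\X[\ell]$ and guarantee the canonical identifications of universal families needed to descend the map $f$. Thus I would exhibit a representable (in fact, a stratification-by-level, locally closed immersion after further decomposition) morphism $\overline{\mathfrak{U}}_{g,\mu}(X,\beta)\to\coprod_{\ell\le N}\X[\ell]$, or more precisely present $\overline{\mathfrak{U}}_{g,\mu}(X,\beta)$ by the groupoid obtained from $\coprod_{\ell\le N}\overline{\mathcal{M}}_{g,n}(X[\ell]^+/X[\ell],\beta)^\mu$ by the natural $X[\ell:\ell]$-action, with the subtlety that overlaps between different $\ell$ are cut out by open/closed conditions detecting which screens are actually present.

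Since $\X[\ell]$ is algebraic (indeed smooth) and the fibers of the putative map to it are the fixed-target moduli stacks, which are Deligne--Mumford, separated and of finite type by Proposition~\ref{AC}, it follows that $\overline{\mathfrak{U}}_{g,\mu}(X,\beta)$ is algebraic, locally of finite type, with finite inertia along the fibers. Finiteness of type globally is then immediate from the boundedness of $\ell$ (Lemma~\ref{boundedness}). To conclude it is Deligne--Mumford one checks the automorphisms are finite and reduced: an automorphism of a stable ramified map covers an automorphism of $W/X$, which by the Stability Condition in Definition~\ref{DefPmap} (every ruled screen carries a marking or a non-fiber image, every end screen two markings or a non-line image) is trivial, so the automorphism group injects into $\mathrm{Aut}$ of the underlying stable map to $X[\ell]^+$, which is finite and unramified.

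\textbf{Main obstacle.} The delicate point is not algebraicity per se but the bookkeeping that makes the cover by the $\overline{\mathcal{M}}_{g,n}(X[\ell]^+/X[\ell],\beta)^\mu$ into a genuine groupoid presentation of a \emph{single} stack rather than a disjoint union: one must show that the locally closed loci inside these fixed-target moduli stacks corresponding to ``the target FM space has exactly the prescribed combinatorial type'' glue correctly under the $X[\ell:\ell]$-equivalences, using that the universal-family identifications of Propositions~\ref{univ},~\ref{PairOper} are canonical and compatible with the ramification, admissibility and stability conditions (which are themselves intrinsic to $W/X$, not to a choice of presentation $W\hookrightarrow X[\ell]^+$). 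Once this compatibility is in place, the corollary follows formally.
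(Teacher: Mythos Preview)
Your proposal uses the right ingredients (Lemma~\ref{boundedness}, Proposition~\ref{AC}, and the algebraicity of $\X[\ell]$) and would lead to a correct proof, but it is more elaborate than what the paper does. The paper does \emph{not} stratify by level and take a disjoint union over $\ell\le N$; instead it picks a \emph{single} sufficiently large $\ell$ (using Lemma~\ref{boundedness}), observes that every target FM space arising can then be regarded as a level-$\ell$ FM space, and writes down the single isomorphism
\[
\overline{\mathcal{M}}_{g,n}(X[\ell]^+/X[\ell],\beta)^\mu \;\cong\;
\overline{\mathfrak{U}}_{g,\mu}(X,\beta)\times_{\X[\ell]} X[\ell].
\]
Since $X[\ell]\to\X[\ell]$ is a smooth surjection, this immediately gives a smooth atlas by a DM stack (Proposition~\ref{AC}), and the diagonal conditions are read off from this fiber-product description together with SC; the paper then appeals to Theorem~4.21 of \cite{DM}. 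In particular, the ``main obstacle'' you isolate---gluing the various $\ell$ and matching the combinatorial types---simply does not arise in the paper's argument, because one $\ell$ already does the job. Your approach buys nothing extra here; the paper's is strictly simpler.
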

\begin{proof}
We apply Theorem 4.21 in \cite{DM}, using that by Lemma \ref{boundedness}
for sufficiently large $\ell$ we have
$\overline{\mathfrak{U}}_{g,\mu }(X,\beta)\to \X[\ell]$ and an isomorphism
\[\overline{\mathcal{M}}_{g,n}(X[\ell]^+/X[\ell],\beta  )^\mu \cong
\overline{\mathfrak{U}}_{g,\mu }(X,\beta)\times_{ \X[\ell] } X[\ell].\]
We need to check conditions:

(1) The diagonal map of the stack is representable, quasi-compact,
separated and unramified.
Using the isomorphism, the first three properties follow from
Proposition \ref{AC} (see \cite[Lemma C.5]{AOV}); the last property follows from SC.

(2) There is a scheme $U$ and a smooth surjective map
$U\ra \overline{\mathfrak{U}}_{g,\mu }(X,\beta)$.
This again follows from Proposition \ref{AC}.
\end{proof}

\section{Properness}\label{MainSection}

\subsection{Preliminary results}
\begin{Lemma}
\label{generalpairsections}
Let $(f:\cC\to X[m]^+, p_1,\ldots,p_n)$ be a stable ramified map over $K$
with
partial stabilization $(\cC,p_1,\ldots,p_n)\to (\cC',p'_1,\ldots,p'_n)$
of prestable curves,
extension of the latter to
$(\overline{\cC}', \bar p'_1,\ldots,\bar p'_n)$ over $R$,
and chosen component $E\subset \overline{\cC}'_0$.
Then there exists open
$\mathcal{U}\subset \overline{\cC}'\times_{\Spec R} \overline{\cC}'$
whose special fiber $\mathcal{U}_0$ satisfies
$\emptyset\ne \mathcal{U}_0\subset E\times E$,
such that for any sections
$(\sigma_1, \sigma_2): \Spec R\to \mathcal{U}$,
adding these to obtain
$\Spec K\to X[m+2]$ and forgetting the first $m$ sections yields
$\cC\to X[m]^+|_{\Spec K}\cong X[m+2]^+|_{\Spec K}\to X[2]^+$ such that
the stabilization $(\cC''\to X[2]^+,p''_1,\ldots,p''_n,\sigma''_1,\sigma''_2)$
extends over $R$ to $\overline{\cC}''\to X[2]^+$
with the extended $\sigma''$-sections landing in the
same component of $\overline{\cC}''_0$, and
the induced $\overline{\cC}'\dashrightarrow X[2]^+$
restricts to a nonconstant map on $E$.
\end{Lemma}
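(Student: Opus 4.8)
The plan is to construct the open set $\mathcal{U}$ by taking the complement, inside $\overline{\cC}'\times_{\Spec R}\overline{\cC}'$, of all the loci whose presence would obstruct one of the conclusions, and then argue that the special fiber of this complement still meets $E\times E$ in a nonempty open set. First I would work on $E$ itself: since $E$ is a component of $\overline{\cC}'_0$ and the map $\overline{\cC}'\dashrightarrow X[m]^+|_{\Spec K}\to X$ (composition with $\pi_X$) is defined generically — recall that by construction the stable ramified map $f$ is finite, hence nonconstant on the components of $\cC$ dominating $E$ — the induced rational map $\overline{\cC}'_0\dashrightarrow X$ is nonconstant on $E$ or, after passing to the FM level, the map to the relevant screen is nonconstant; in either case there is a nonempty open $E^\circ\subset E$ on which the eventual $\overline{\cC}'\dashrightarrow X[2]^+$ will restrict to a nonconstant morphism. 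This handles the last clause of the statement and fixes the first constraint cutting down $\mathcal{U}_0$.

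Next I would impose the conditions needed so that adding $(\sigma_1,\sigma_2)$ and invoking the Lifting/Pair operations of \S\ref{operation} and Proposition \ref{PairOper} is legitimate. Over $\Spec K$ the curve $\cC$ maps to $X[m]^+$, and we want two sections of $\cC\to\Spec K$ — equivalently two points of the generic fiber $\cC_K$ — that (i) lie in the smooth locus of $\cC_K$, (ii) avoid the existing $n$ markings and each other, and (iii) map under $f$ to points avoiding the relative singular locus of $X[m]^+/X[m]$ and the images of the $\sigma_i\circ g$. Each of these is an open (and nonempty, generically, as a condition on a section of $\mathcal{U}\subset\overline{\cC}'\times\overline{\cC}'$) condition; shrinking further if necessary, we get sections $\sigma''_1,\sigma''_2$ of $\cC''\to X[2]^+$ after forgetting the first $m$ labels, using $X[m]^+|_{\Spec K}\cong X[m+2]^+|_{\Spec K}$ from the Lifting discussion. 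The existence of the extension $\overline{\cC}''\to X[2]^+$ over $R$ is automatic from the valuative criterion for the proper Deligne--Mumford stack $\overline{\mathcal{M}}_{g,n+2}(X[2]^+/X[2],\beta)$, or more elementarily from properness of the relevant space of stable maps to the fixed target $X[2]^+$.

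The crux — and the step I expect to be the main obstacle — is the clause that the two extended $\sigma''$-sections land in the \emph{same} component of the special fiber $\overline{\cC}''_0$. This is genuinely a closedness-type requirement on where the limits of $\sigma''_1,\sigma''_2$ go: if the two chosen points on $E$ are "too far apart" relative to how the stable-map limit bubbles off, their limits could end up on different components of $\overline{\cC}''_0$. My plan is to control this by choosing both sections to specialize to points of the same component $E$ of $\overline{\cC}'_0$ and arguing that the stabilization process (which only contracts components and separates the two new sections by at most one bubble when they collide) cannot move their limits onto distinct components unless the limiting points on $E$ already coincide — which we have excluded by condition (ii). Concretely, the difference between $\overline{\cC}'$ and $\overline{\cC}''$ near the special fiber is governed by Lemma \ref{forgetting} and the Lifting construction: away from where $\sigma''_{m+1}, \sigma''_{m+2}$ meet the old sections or the relative singular locus, the family $\cC''$ over $R$ agrees with $\cC'$ (up to stabilization of at most the two new tails), so the specializations of $\sigma''_1,\sigma''_2$ are exactly the specializations of $\sigma_1,\sigma_2$, which lie on $E$ by construction. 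Thus the intersection of all the open conditions above with $E\times E$ is nonempty open, and we take $\mathcal{U}$ to be (a small enough open neighborhood in $\overline{\cC}'\times_{\Spec R}\overline{\cC}'$ of) this set; shrinking once more to a basic open set preserves $\emptyset\ne\mathcal{U}_0\subset E\times E$, completing the construction.
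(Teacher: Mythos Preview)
You correctly identify the ``same component'' clause as the crux, but your argument for it does not go through. The curve $\overline{\cC}''$ is the \emph{stable-map} extension of $\cC''\to X[2]^+$; it is governed by the target $X[2]^+|_{\Spec R}$, which you have not constructed or controlled. Lemma~\ref{forgetting} compares the \emph{targets} $X[N+1]^+$ and $X[N+1]\times_{X[N]}X[N]^+$ over a fixed base; it says nothing about how the domain curve changes under a stable-map limit. In particular, your claim that ``the family $\cC''$ over $R$ agrees with $\cC'$ (up to stabilization of at most the two new tails)'' is unjustified: the extension $\Spec R\to X[2]$ can hit the boundary (cf.\ Example~\ref{speed}), so the special fiber of the target may be a nontrivial FM degeneration, and the stable-map limit $\overline{\cC}''_0$ can then differ from $\overline{\cC}'_0$ by arbitrary chains of rational curves, possibly separating the limits of $\sigma_1$ and $\sigma_2$. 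The difficulty is precisely that the target $X[2]^+|_{\Spec R}$ \emph{depends on the pair} $(\sigma_1,\sigma_2)$, so ``general $(u,u')$'' is a moving condition against a moving target.

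The paper resolves this by working in a genuine family over an open $\mathcal{V}\subset\overline{\cC}'\times_{\Spec R}\overline{\cC}'$. The key step you are missing is the observation (via the isomorphism $\X[m+2]''_{m+1}\cong X[m+1]\times[\A^1/\mathbb{G}_m]$) that, once one lands in $X[m+2]''$, the associated FM space with $m+1$ sections and marked component depends only on the \emph{first} section, i.e.\ factors through the image $\mathcal{B}$ of the first projection. One then forgets $m$ sections to get a level-$2$ FM space $\mathcal{Z}\to\mathcal{B}$, and the rational map $\mathcal{B}\times_{\Spec R}\overline{\cC}\dashrightarrow\mathcal{Z}$ extends over the codimension-one generic point of $E\times E$ by the valuative criterion. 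This simultaneously yields well-definedness at $(u,u',u)$ and $(u,u',u')$ for general $(u,u')$, forcing both $\sigma''$-limits onto the strict transform of $E$; nonconstancy on $E$ then follows because the two universal sections of $X[2]^+$ are distinct, not from any appeal to the composite map to $X$ (which, contrary to your first paragraph, can certainly be constant on $E$ when $E$ arises from components of $\cC$ mapping to screens).
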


\begin{proof}
We easily reduce to the case $\cC'=\cC$, then we write
$(\overline{\cC},\bar p_1,\ldots,\bar p_n)$ for
$(\overline{\cC}',\bar p'_1,\ldots, \bar p'_n)$.
There exists open
$\mathcal{V}\subset \overline{\cC}\times_{\Spec R} \overline{\cC}$
disjoint from the diagonal of $\overline{\cC}$, the sections
$\bar p_i$ on both factors, and the pre-image in
$\cC\times_{\Spec K}\cC$ of the diagonal of $X[m]^+$, the $m$ sections,
and the relative singular locus of $X[m]^+/X[m]$ on both factors,
with special fiber $\mathcal{V}_0$ nonempty and contained in $E\times E$,
such that there is a morphism
$$\mathcal{V}\to X[m+2]$$
extending the obvious one on $\Spec K\times_{\Spec R}\mathcal{V}$.
By symmetry the generic point of $\mathcal{V}_0$ maps into $X[m+2]''$,
so by further shrinking $\mathcal{V}$ preserving the condition
$\emptyset\ne \mathcal{V}_0\subset E\times E$ we may suppose $\mathcal{V}$
irreducible and smooth over $\Spec R$ with image contained in
$X[m+2]''$.
By construction the image of
$\Spec K\times_{\Spec R}\mathcal{V}$
is contained in $X[m+2]'''$, which means that the divisor
$\Delta_{m+1,m+2}$ of $X[m+2]''$ pulls back to
$e\cdot \mathcal{V}_0$ for some
integer $e\ge 0$.

We claim that $\mathcal{V}\times_{\Spec R}\cC\to X[2]^+$
extends to a map
$$\phi:\mathcal{V}\times_{\Spec R}\overline{\cC}\dashrightarrow X[2]^+$$
well-defined on an open subset containing the points
$(u,u',u)$ and $(u,u',u')$ for general $(u,u')\in\mathcal{V}_0$.
Given this, we then may define $\mathcal{U}$ by
deleting from $\mathcal{V}$ the points $(u,u')\in \mathcal{V}_0$ for which
$\phi$ is not defined at $(u,u',u)$ or at $(u,u',u')$, then the
desired conclusion holds since
the extension $\overline{\cC}''\to X[2]^+$ mentioned in the
statement of the lemma may be obtained by resolving the indeterminacy of
$\phi\circ((\sigma_1,\sigma_2)\times 1_{\overline{\cC}}):
\overline{\cC}\dashrightarrow X[2]^+$
and stabilizing.

To prove the claim,
we let $\mathcal{B}$ denote the image of $\mathcal{V}$ by the first projection
morphism to $\overline{\cC}$.
There is a unique morphism $\mathcal{B}\to X[m+1]$ through which the
composite morphism $\mathcal{V}\to X[m+2]''\to X[m+1]$ factors.
Then for $\mathcal{B}\to X[m+1]\times [\A^1/\mathbb{G}_m]$ on the second factor
given by the divisor $e\cdot \mathcal{B}_0$, if we let
$\mathcal{W}\to\mathcal{B}$ denote an FM space corresponding to it
under the isomorphism
$\X[m+2]''_{m+1}\cong X[m+1]\times [\A^1/\mathbb{G}_m]$, then there is a
morphism $X[m+2]^+|_{\mathcal{V}}\to\mathcal{W}$, fitting into a cartesian
diagram with $\mathcal{V}\to \mathcal{B}$, compatible with $m+1$ sections
and fixing $X$.
By stability
the restriction $X[m+2]^+|_{\Spec K\times_{\Spec R}\mathcal{V}}\to
\Spec K\times_{\Spec R}\mathcal{W}$ is compatible with the unique
morphisms to $X[m]^+|_{\Spec K}$ preserving $m$ sections and fixing $X$.
Forgetting the first $m$ sections, we obtain a level-2 FM space
$\mathcal{Z}$ over $\mathcal{B}$ and morphism $\mathcal{W}\to\mathcal{Z}$
compatible with $X[m+2]^+|_{\mathcal{V}}\to X[2]^+|_{\mathcal{V}}$.
Then we obtain
$\mathcal{B}\times_{\Spec R}\overline{\cC}\dashrightarrow \mathcal{Z}$
compatible with $\mathcal{V}\times_{\Spec R}\overline{\cC}\dashrightarrow
X[2]^+|_{\mathcal{V}}$ in the sense that if
$\mathcal{D}\subset \mathcal{B}\times_{\Spec R}\overline{\cC}$ is open
on which the map to $\mathcal{Z}$ is well-defined, then
$\mathcal{V}\times_{\Spec R}\overline{\cC}\dashrightarrow X[2]^+|_{\mathcal{V}}$
will be well-defined on $\mathcal{V}\times_{\mathcal{B}}\mathcal{D}$,
together fitting into a commutative diagram.
We may take $\mathcal{D}$ to contain the generic point of $E\times E$,
then we clearly have
$(u,u',u')\in \mathcal{V}\times_{\mathcal{B}}\mathcal{D}$ for general
$(u,u')\in \mathcal{V}_0$.
\end{proof}

\begin{Lemma}
\label{pullout}
Let $(f:\cC\to X[m]^+, p_1,\ldots,p_n)$ be a stable ramified map over $K$
such that the usual stable map compactification has a component $E$
of the closed fiber mapped into a relative singular locus of
$X[m]^+/X[m]$.
Then there exists a neighborhood $\mathcal{U}$ of the generic point of $E$,
such that for any section
$\sigma: \Spec R\to \mathcal{U}$, we have
$\cC\to X[m]^+|_{\Spec K}\cong X[m+1]^+|_{\Spec K}$ whose extension fits into
a commutative diagram
\[\xymatrix{
{\overline{\cC}'}\ar[r]\ar[d]_c & {X[m+1]^+}\ar[d] \\
{\overline{\cC}}\ar[r] & {X[m]^+}
}\]
of stable maps over $R$,
where $c$ is a contraction, such that the lifted section
$\sigma':\Spec R\to \overline{\cC}'$ meets the component
$E'\subset \overline{\cC}'_0$ corresponding to
$E\subset \overline{\cC}_0$.
\end{Lemma}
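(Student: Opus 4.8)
The plan is to adjoin $\sigma$ as an $(m{+}1)$-st marked point, lift the map over $K$ via the construction of \S\ref{operation}, take its stable map limit inside $X[m+1]^+$, and recover the given limit $\overline{\cC}$ by pushing it forward along the forgetful morphism $\pi_+\colon X[m+1]^+\to X[m]^+$ and stabilizing. To pin down $\mathcal U$: write $\overline{\cC}\to W$ for the stable map limit over $R$, factoring through the FM space $W=\bar w^*X[m]^+$ attached to the extension $\bar w\colon\Spec R\to X[m]$ of the point $w$ determined by $f$, and let $\eta_E$ be the generic point of $E$. By hypothesis $E$ maps into $W_0^{\mathrm{sing}}$, which is disjoint from the images of the sections $\sigma_1,\dots,\sigma_m$ of $X[m]^+/X[m]$; hence $\eta_E$ lies in the smooth locus of $\overline{\cC}/R$ and is disjoint from the closures of the $p_i$ and of the $f^{-1}(\sigma_i(w))$, $i=1,\dots,m$. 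So one can choose an open $\mathcal U\ni\eta_E$ with $\mathcal U_0\subset E$ on which these properties hold, and then for any section $\sigma\colon\Spec R\to\mathcal U$ the restriction $\sigma|_K\in\cC(K)$ is a smooth point distinct from the $p_i$ whose image $h:=f(\sigma|_K)$ in $W|_K$ avoids $W_K^{\mathrm{sing}}$ (by PRIC) and the $\sigma_i(w)$.

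Running the lifting construction of \S\ref{operation} on $h$ then produces $w'\colon\Spec K\to X[m+1]$ together with a canonical identification $X[m]^+|_{\Spec K}\cong X[m+1]^+|_{\Spec K}$ preserving the $m$ sections, fixing $X$, and carrying $h$ to the $(m{+}1)$-st section; thus $(\cC,p_1,\dots,p_n,\sigma|_K)\to X[m+1]^+$ is a stable map over $K$, and I take $\overline{\cC}'\to X[m+1]^+$ to be its stable map limit over $R$, with $(m{+}1)$-st section $\sigma'$. Composing with $\pi_+$, forgetting $\sigma'$ and stabilizing yields a stable map to $X[m]^+$ over $R$ with stabilization morphism $c$ out of $\overline{\cC}'$; because the identification above is compatible with $\pi_+$ (equivalently, with $\tilde\pi_+$ of Lemma \ref{forgetting}), this stable map restricts over $K$ to the original $\cC\to X[m]^+$, so by separatedness of the moduli of stable maps to $X[m]^+$ (a projective variety) it coincides with $\overline{\cC}\to X[m]^+$. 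This produces the commutative square of the statement with $c\colon\overline{\cC}'\to\overline{\cC}$.

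For the final assertion, $c$ restricts to the identity over $K$ and $\sigma'|_K=\sigma|_K$, so $c\circ\sigma'=\sigma$ by separatedness of $\overline{\cC}/R$, whence $c(\sigma'_0)=\sigma_0\in E$; since $c$ is a contraction, $c^{-1}(E)$ is connected---it is the strict transform $E'$ of $E$ (carried isomorphically onto $E$ by $c$), possibly with a tree of $c$-contracted rational curves attached to $E'$ at the point over $\sigma_0$---and $\sigma'_0\in c^{-1}(E)$, so $\sigma'$ meets $E'$ as claimed. I expect the main obstacle to be precisely the identification $c\colon\overline{\cC}'\to\overline{\cC}$: one must check that applying $\pi_+$, forgetting $\sigma'$, and stabilizing genuinely returns the original $K$-map, which comes down to the compatibility of the canonical isomorphism $X[m]^+|_{\Spec K}\cong X[m+1]^+|_{\Spec K}$ of \S\ref{operation} with $\pi_+$; granting that, separatedness of the stable map moduli closes the argument. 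A secondary point worth noting is that the hypothesis $E\to W_0^{\mathrm{sing}}$ is exactly what lets $\mathcal U$ be taken disjoint from the $\sigma_i$, and that (since sections of FM families lie in smooth loci) a screen must in fact bubble off over $f_0(\sigma_0)$, so in general $\sigma'_0$ lies on the $c$-contracted tree attached to $E'$ rather than on $E'$ itself.
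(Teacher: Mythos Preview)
Your construction of the commutative square is fine: lifting over $K$ via \S\ref{operation}, taking the stable map limit in $X[m+1]^+$, and recovering $\overline{\cC}$ by pushing down along $\pi_+$ and stabilizing is correct, and the separatedness argument identifies the result with the given $\overline{\cC}\to X[m]^+$.

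The gap is exactly where you yourself flag it: the final assertion that $\sigma'$ meets $E'$. You establish only $c(\sigma'_0)=\sigma_0$, hence $\sigma'_0\in c^{-1}(\sigma_0)$. Your description of $c^{-1}(E)$ as $E'$ plus a contracted tree attached \emph{only at the point over $\sigma_0$} is not justified --- contracted components can also arise from pieces of $\overline{\cC}'_0$ mapping non-constantly into the screen of $X[m+1]^+|_{\bar w'(0)}$ collapsed by $\pi_+$, and these need not sit over $\sigma_0$. More to the point, even granting your description, it does not give $\sigma'_0\in E'$: as you concede in your last sentence, $\sigma'_0$ will in general lie on the contracted tree rather than on $E'$ itself. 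Since your $\mathcal U$ is chosen before $\sigma$, but $c$ (and hence the locus where bubbling occurs) depends on $\sigma$, you cannot simply shrink $\mathcal U$ to avoid this.

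The paper closes this gap by a genuinely different device. Rather than treating each $\sigma$ separately, it builds a morphism $\mathcal V\to X[m+1]$ on a neighborhood of $\eta_E$ and shows --- via the local description of the stack $\X[m+1]'_m$ near a point where the marked component is ruled with no sections --- that the induced map $\mathcal V\to \X[m+1]'_m$ factors (up to $2$-isomorphism) through $\Spec R$. This produces a \emph{single} level-$(m+1)$ FM space $\mathcal W\to\Spec R$, independent of $\sigma$, together with a rational map $\overline{\cC}\dashrightarrow\mathcal W$; one then takes $\mathcal U$ to be its domain of definition. For any $\sigma$ landing in $\mathcal U$ the rational map is already regular near $\sigma$, so the passage from $\overline{\cC}$ to $\overline{\cC}'$ is an isomorphism there and $\sigma$ lifts directly onto $E'$. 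The uniformity of $\mathcal W$ is precisely what your section-by-section approach lacks, and it is what makes the choice of $\mathcal U$ compatible with the conclusion for \emph{every} $\sigma$.
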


\begin{proof}
As in the proof of Lemma \ref{generalpairsections}, there exists
open $\mathcal{V}\subset \overline{\cC}$ and morphism
$\mathcal{V}\to X[m+1]$ lifting $\Spec R\to X[m]$,
such that the induced $\mathcal{V}\to \X[m+1]'_m$ factors up to 2-isomorphism
through $\Spec R$.
The last conclusion follows from a description
of $\X[m+1]'_m$ in a neighborhood of a point where the marked component
is ruled with no sections.
Taking $I\subset\{1,\ldots,m\}$ to be the set of sections on all the
descendants of the marked component, and
$g=0$ to be a defining equation for $\Delta_I\subset X[m]$ in a
neighborhood $U$ of the image point, then an open neighborhood in $\X[m+1]'_m$ may
be identified with the stack in which an object consists of morphism to $U$,
pair of line bundles each with regular section, and trivialization of the
tensor product of the line bundles taking the product of
the sections to the function $g$.
Then there is the extended FM space $\mathcal{W}\to \Spec R$,
and we may take $\mathcal{U}$
to be the complement in $\mathcal{V}$ of the indeterminacy locus of
$\overline{\cC}\dashrightarrow \mathcal{W}$.
\end{proof}

\subsection{Valuative criteria}
\label{valuativecrit}
We want to show that the stack $\overline{\mathfrak{U}}_{g, \mu }(X,\beta )$ of stable ramified maps is
proper over $\bk$.
We do this using the valuative criterion for properness stated in
\cite{DM}, by first verifying the existence of extensions as indicated,
then checking that the valuative criterion for separatedness is satisfied.

\begin{Prop}\label{proper}
Given an $n$-pointed stable ramified map $(f,{\bf p})$ over the
quotient field $K$ of a DVR $R$, there is a stable ramified map
extension $(\tilde{f},{\tilde{\bf p}} )$ of $(f,{\bf p} )$,
namely, $(\tilde{f},{\tilde{\bf p}})$ is a stable ramified map
defined over a DVR $R'$ which is the integral closure of $R$ in a
finite extension $K'$ of $K$ and
$(\tilde{f}|_{K'},{\tilde{\bf p}}) \cong (f,{\bf p} )|_{K'}$.
\end{Prop}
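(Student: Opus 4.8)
The plan is to reduce, after a finite base change, to the case of a fixed FM model $X[m]^+$, to extend the underlying data there as an ordinary stable map, and then to repair the special fiber by finitely many elementary operations — adjoining auxiliary sections, lifting to higher level, forgetting sections — until all of PRIC, DPC, AC, SC hold. Concretely: after replacing $R$ by its integral closure in a finite extension of $K$, the target $W$ of $(f,{\bf p})$ is the pullback of $\pi_{X[m]}\colon X[m]^+\to X[m]$ along a $K$-point $\Spec K\to X[m]$ for some $m$, so that composing $f$ with $\pi_{X[m]}$ presents it as a family of stable maps to the projective variety $X[m]^+$ over $\Spec K$, of the induced class $\beta\in NE_1(X[m]^+)$. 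Extending the $K$-point to $\Spec R\to X[m]$ by properness of $X[m]$ yields an FM model $\cW$ of $X$ over $R$, and since the relative moduli of $(g,\beta)$-stable maps to $\cW/R$ is proper, after a further finite extension I would obtain an extension to a stable map $\overline f\colon\overline{\cC}\to \cW$ over $R$, carrying along also the underlying prestable curve with its $n$ sections, extended over $R$ in the obvious way.

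The heart of the argument is to correct the special fiber, one defect at a time. If a component $E$ of the special fiber of the (partially) stabilized curve is carried into the relative singular locus of the target over $R$, I would invoke Lemma~\ref{pullout}: a general section through the generic point of $E$ lifts $\Spec R\to X[m]$ to a map to $\X[m+1]'_m$ factoring through $\Spec R$ up to $2$-isomorphism, hence produces an FM model with one extra screen together with a rational map from $\overline{\cC}$ to it; resolving indeterminacy, stabilizing, and forgetting the auxiliary section gives a new stable-map extension, to an FM space of $X$ over $R$, in which $E$ maps onto the new screen and meets the relative singular locus in strictly fewer points. Iterating clears all such components, the process terminating because the number of screens that can ever be created is bounded in terms of $X$, $\beta$, $g$, $n$ by Lemma~\ref{boundedness}. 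Once no curve component maps into a relative singular locus, I would impose AC at each of the finitely many nodes of the special fiber lying over it, by the standard predeformability argument — a base change $t\mapsto t^{1/e}$ matching the two branch orders, followed by normalization and re-gluing — as in Theorem~2.11 of \cite{J.Li}. For PRIC: if the ramification index of the special fiber at some $p_i$ exceeds $\mu_i$, the excess should be absorbed by pulling out a screen over $\overline f_0(p_i)\in X$ carrying the component of $\overline{\cC}_0$ through $p_i$, which I would accomplish using the tangent line map of Lemma~\ref{TLMC} together with Lemma~\ref{generalpairsections} applied with $E$ that component, so that on the new screen the ramification at $p_i$ drops; iterating, again with Lemma~\ref{boundedness} for termination, brings the ramification index at each $p_i$ to exactly $\mu_i$. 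For DPC: whenever $\overline f_0(p_i)=\overline f_0(p_j)$, Lemma~\ref{generalpairsections} applied to the components through $p_i$ and $p_j$ separates the two markings onto a fresh screen, and iterating removes all coincidences. Finally I would contract the finitely many unstable components of the curve and unstable screens of the target to satisfy SC, and check that the resulting $R$-family is a stable $\mu$-ramified map restricting to $(f,{\bf p})$ over the new fraction field.

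The step I expect to be the main obstacle is turning the list of corrections above into a genuinely well-founded induction: one must package them into a single complexity function on $R$-families — say, a lexicographically ordered tuple counting the components and nodes that violate each condition, with the otherwise-growing contribution of the target controlled by Lemma~\ref{boundedness} — and verify that each elementary operation strictly decreases it without re-creating a condition already arranged. The most delicate case is PRIC, where one must follow precisely how the ramification indices at the $p_i$ move as screens are pulled out over the points $\overline f_0(p_i)$, making sure that no operation pushes a ramification index below the prescribed value $\mu_i$.
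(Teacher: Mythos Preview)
Your iterative repair scheme is a natural first idea, but it is missing the one piece of local analysis that actually makes the argument go through, namely what the paper isolates as Lemma~\ref{analysis}. The crux of PRIC is not that you can pull out a screen at $\overline f_0(p_i)$ --- Lemma~\ref{generalpairsections} does that --- but that the ramification index at $p_i$ genuinely drops on the new screen. Nothing in your proposal explains why it should: a stable-map limit to a blown-up target can perfectly well retain a ramification jump at $p_i$. The paper's mechanism is different and global: one takes the stable limit of the map to $\PP(TX)$ (via Lemma~\ref{TLMC}) simultaneously with the map to the FM target, and Lemma~\ref{analysis} says precisely that a jump of ramification index, a non-admissible node, or a contracted component each forces $\PP(Tf)$ to be ill-defined at the offending point. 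Hence stability of the limit in $\PP(TX)$ rules out all three defects at once, and no iteration is needed. You invoke the tangent line map only in passing for PRIC and not at all for the other conditions; without Lemma~\ref{analysis} linking it to each defect, the claim ``on the new screen the ramification at $p_i$ drops'' is unsupported.

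Your termination argument also does not work as stated. Lemma~\ref{boundedness} bounds the number of screens in a \emph{stable ramified} map; your intermediate objects are only ordinary stable maps and there is no a priori bound on how many screens your repair steps might produce before all conditions hold. The paper sidesteps this by determining in advance exactly how many sections to add --- $2c$ from the components of the $\PP(TX)$-stable limit, $n$ for the markings, $j$ for ruled components collapsed over $K$, then a bounded $k$ from Lemma~\ref{pullout} --- so that the target is built in one pass rather than by an open-ended loop. Finally, note that the paper treats $\dim X=1$ separately (there $\PP(TX)=X$ and the tangent-line argument is vacuous; one falls back on Harris--Mumford via Abhyankar's lemma), a case distinction your proposal does not make.
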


It is enough to prove the above when $R=\bk[[t]]$. Let
$R=\bk[[t]]$ from now on in this section.

\bigskip

\noindent{\em Proof of Proposition \ref{proper} when $\dim X=1$.}
Let $\cW /K$ be the pullback of the  universal family
$X[m]^+$ by a map $\Spec K\ra X[m]$. Then, using the image of the
marked points under $\cC\ra \cW\ra X[m]^+$, there is an induced
map $\Spec K\ra X[m:n]$. Thus, we may assume that $\cW$ is the
pullback of $X[n]^+$ under $g:\Spec K\ra X[n]$ such that $f\circ
p_i=\sigma _i\circ g$. Now take the usual stable map limit of this
$f: \cC\ra X[n]^+$.  The extension is a
stable ramified map as shown in the proof of Theorem 4 in
\cite{HM} using Abhyankar's Lemma (which removes any possible dimension
$1$ branch locus in the special fiber), the purity of the branch locus
(which removes any possible dimension $0$ branch locus),
and the universal covering of an $A_l$-singularity
(which shows the admissibility).
 \hfill$\Box$

\bigskip

\noindent{\em Local Analysis.}  Let $(\pi: \cC\ra \Spec R,\{
p_1,\ldots,p_n\}, f:\cC\ra X)$ be an {\em ordinary stable map} over
$\Spec R$ such that:

\begin{itemize}

\item Restricted to the generic fiber $\cC _K$, $f$  is a stable
$\mu$-ramified  map.

\item Restricted to the closed fiber $\cC _0$, $f$ is no longer
$\mu$-ramified.

\end{itemize}

 Here $\mathcal{W}=X\times \Spec R$. Assume that $\dim X=r\ge 2$ and
$\cC$ is {\em irreducible}. Then only the following cases are
possible unless some pair of markings have the same image over the
closed point of $\Spec R$.

\begin{enumerate}

\item (Jump of a Ramification Index) There exists a smooth point
$p$ on a component of $\cC _0$ non-contracted under $f$ such that
the ramification index jumps at $p$.

\item (Creation of a Nodal Point) There exists a nodal point $p$
of $\cC _0$ such that any component of $\cC _0$ containing $p$
does not contract under $f$.

\item (Contraction of a Component) There exists a contracted
component $E$ of $\cC _0$ and there exist no pair of ramification
markings approaching $E$.

\end{enumerate}

\begin{Lemma}\label{analysis}  In Case (1) or (2), the tangent line
map
$$\PP Tf: \cC\dashrightarrow \PP
TX$$ is not well-defined at $p$. In Case (3), $(\PP Tf)(E)$ is not
a point or there is a point $q\in E$ such that $\PP Tf$ is
not well-defined at $q$. \end{Lemma}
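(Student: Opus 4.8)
The plan is to treat the three cases separately, using the explicit local formula for $\PP(Tf)$ from the proof of Lemma \ref{TLMC} together with the hypothesis that $f|_{\cC_K}$ is $\mu$-ramified while $f|_{\cC_0}$ is not. In each case the strategy is the same: suppose for contradiction that $\PP Tf$ \emph{is} well-defined at the relevant point (and, in Case (3), that $(\PP Tf)(E)$ is a point), and then use the regularity of $\PP Tf$ to run the reverse of the argument in Lemma \ref{TLMC}, deducing that PRIC and AC persist on the closed fiber, contrary to assumption.

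\medskip\noindent\textbf{Cases (1) and (2).} First I would work at the smooth point $p$ of Case (1). Choose an $R$-relative uniformizer $x$ at $p$ and $R$-relative uniformizers $t_1,\ldots,t_r$ at the smooth image point $q=f(p)$ on $X$ (recall $\cW=X\times\Spec R$ here). Write $f^\ast t_j=\sum_k a_{j,k}(t)\,x^k$ with $a_{j,k}\in R$. On the generic fiber the common order of vanishing in $x$ is $\mu_p$, the prescribed ramification index, because $\cC_K$ satisfies PRIC; on the closed fiber, by hypothesis the order jumps, so the reduction mod $t$ of the vector $(a_{j,\mu_p})_j$ vanishes. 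But $\PP Tf$ at $p$ is exactly the class $[x^{-(\mu_p-1)}f^\ast dt_1:\cdots:x^{-(\mu_p-1)}f^\ast dt_r]$, whose leading term along the closed fiber is $[(\mu_p\,a_{1,\mu_p}):\cdots]$ up to the divisor-of-$t$ correction — if $\PP Tf$ were regular at $p$ this vector would be nonzero mod $t$, contradicting the jump. (One has to be a little careful bookkeeping the power of $t$ that can be factored out of all the $a_{j,\mu_p}$ simultaneously; after dividing by the maximal such power the limiting direction is genuinely the one computed by $\PP Tf$, and regularity forces it to be nonzero, i.e.\ forces PRIC to hold on $\cC_0$.) Case (2) is the nodal analogue: at a node $p$ of $\cC_0$ whose branches are not contracted, write $\widehat{\mathcal O}_p\cong R[[x,y]]/(xy-t^a)$ for some $a\ge 1$, and observe that since $\cW=X\times\Spec R$ is smooth the image $f(p)$ is a smooth point, so the two branches map to smooth curve germs; the AC-type identity holds generically (as $\cC_K$ is admissible) but fails on the closed fiber precisely because the node has been created, i.e.\ the leading coefficients of $f^\ast t_j$ along the two branches do not match up as demanded by admissibility. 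Comparing with the formula $[m,\,mz_3\pm(\ldots),\,\ldots]$ for $\PP Tf$ at a node from the proof of Lemma \ref{TLMC}, well-definedness of $\PP Tf$ at $p$ would again force the admissibility relation on $\cC_0$, a contradiction.

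\medskip\noindent\textbf{Case (3).} Now $E\subset\cC_0$ is an $f$-contracted component, and no pair of ramification markings limits onto $E$. Suppose $(\PP Tf)(E)$ is a single point $[v]\in\PP T_qX$ and $\PP Tf$ is regular along all of $E$. Since $E$ is contracted, $\PP Tf|_E$ records, for each point of $E$, the limiting tangent direction of the nearby (uncontracted) branches of $\cC$ through the point $q=f(E)$; constancy says all these limiting directions coincide, equal to $[v]$. I would then blow down $E$ to obtain the image curve $\bar\cC$ with its induced family of stable maps; the point is that a contracted $E$ on which the tangent directions are constant and on which there is no pair of ramification markings contributes nothing that would obstruct extending $f$ past it without creating a node or a ramification jump — more precisely one shows the stabilization $\bar\cC\to X$ over $R$ still has $f|_{\bar\cC_0}$ satisfying PRIC and AC, so $f$ was already $\mu$-ramified on the closed fiber, contradicting the standing hypothesis of the Local Analysis. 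Conversely if $(\PP Tf)(E)$ is not a point there is nothing to prove, and if it is a point but $\PP Tf$ fails to be regular at some $q\in E$ there is again nothing to prove; so the only case requiring work is the one just handled.

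\medskip\noindent\textbf{Main obstacle.} The genuinely delicate point is Case (3): extracting from ``$E$ is contracted, carries no pair of ramification markings, and $\PP Tf$ is regular and constant on $E$'' the conclusion that blowing $E$ down preserves PRIC, AC, and SC on the closed fiber. This requires a careful local analysis at each node of $\cC_0$ lying on $E$ and at each marking limiting onto $E$ — distinguishing, via the constancy of the tangent direction, the node-creation phenomenon (which constancy rules out) from a harmless collapse — and then checking that the resulting family over $R$ is still a family of stable ramified maps in the sense of Definition \ref{definition}. By contrast Cases (1) and (2) are essentially the contrapositive of the two halves of the proof of Lemma \ref{TLMC} and should follow by reading that computation backwards.
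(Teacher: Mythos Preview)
Your proposal has a genuine gap in Case~(2), and the framing as ``the contrapositive of Lemma~\ref{TLMC}'' is misleading: what is needed is the \emph{converse}, which requires new work. In the Local Analysis setup the target is $\cW=X\times\Spec R$, which is smooth over $\Spec R$, and the node $p$ is \emph{created} on $\cC_0$, so $\cC_K$ is smooth near~$p$. Hence AC is vacuous on $\cC_K$ (there are no nodes) and cannot even be formulated at $p$ on $\cC_0$ (the image $f(p)$ lies in the smooth locus of $X$, not in any relative singular locus). Your appeal to ``the AC-type identity holds generically'' and to the nodal formula $[m,\,mz_3\pm\ldots]$ from the proof of Lemma~\ref{TLMC} is therefore misplaced: that formula is derived under the hypothesis that the node maps to a point with local ring $\bk[[z_1,\ldots,z_{r+1}]]/(z_1z_2)$, which is exactly what fails here. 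The paper's argument is instead a Newton-polygon analysis of the exponents of $f$ in $R[[x,y]]/(xy-t^k)$, producing explicit arcs $(x,y)=(ct^\ell,c^{-1}t^{k-\ell})$ along which $\PP Tf$ has $c$-dependent limits (or else reducing to Case~(1)); there is no shortcut via AC.

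Case~(1) also needs more than you wrote: the claim ``if $\PP Tf$ were regular at $p$ then $(\mu_p a_{j,\mu_p})$ would be nonzero mod $t$'' is unjustified, since a rational map $[g_1:\cdots:g_r]$ on a surface can be regular at a point where all $g_j$ vanish, provided the ideal $(g_1,\ldots,g_r)$ is locally principal. (One can push your idea through by arguing that any principal generator must then be a power of $t$, forcing $\partial f_j/\partial x\equiv 0$ on $\cC_0$ and hence contraction; the paper instead exhibits explicit branches of $\{\partial f_1/\partial x=0\}$ along which $\PP Tf$ tends to a direction different from that along $\cC_0$.) For Case~(3), your blow-down strategy does not yield a contradiction: since $(\cC,f)$ is already an ordinary stable map, a contracted $E$ carries at least three special points (or has positive genus), so contracting it does not produce another stable map to compare against. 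The paper's route is quite different: project to $\PP^1$, use the one-dimensional stable ramified limit to locate an extraneous ramification point, pass to a degenerated target $X[2]^+$ via Lemma~\ref{generalpairsections}, and find a line image carrying a ramification point, reducing to Case~(1).
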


\begin{proof}
\emph{Case (1).}  Let
$m$ be the ramification index of $f|_{\cC _0}$ at $p$, and let
$x,t$ be local parameters of $\mathcal{O}_{p}$, where $t$ is
the local parameter of $R$.  Let $f=(f_1,\ldots,f_r)$
after introducing a local coordinate system of $X$ at $f(p)$. Without
loss of generality, we
assume that the tangent line direction of the image curve of
$x$-axis under $f(x,0)$ is $[1,0,\ldots,0] \in \PP ^{r-1}=\PP
T_{f(p)}X$. Consider the equation $\frac{\partial f_1}{\partial x}=0$.
It defines a curve $Z$ not containing the
$x$-axis, but containing the origin $(x,t)=(0,0)$. If the curve
has at least two irreducible components, then $f$ is generically unramified
along one of the components by assumption of stable ramified maps.
Hence along the component, $\PP Tf$ approaches a point different from
$[1,0,\ldots,0]$. If the curve $Z$ is irreducible, then along the
curve, $\PP Tf$ approaches a point different from $[1,0,\ldots,0]$ since the
generic point of the curve is a zero of multiplicity $m-1$ of
$\frac{\partial f_1}{\partial x}$, while the generic point
cannot be a zero of multiplicity $m-1$ or higher of the derivative
$\frac{\partial f_i}{\partial x}$ for all $i\ne 1$
(otherwise, along the curve, $f$ has ramification index at least $m$).

\bigskip

\emph{Case (2).} The domain surface $\cC$ is \'etale locally
defined by the relation $xy=t^k$ at $(0,0,0)$ for some positive
integer $k$, with $\pi (x,y,t)= t$. Furthermore, $f(x,y,t) \in R
[[x,y]]^{\oplus r}$, $f(x,0,0)\neq 0$, and $f(0,y,0)\neq 0$.
We may suppose that $f(0,0,t)=0$.
The argument is based on the set of lattice points $(\delta,\nu)$ of monomials
$x^\delta t^\nu$ ($\delta\ge -\nu/k$, $\nu\ge 0$) appearing in $f$
(Figure \ref{figuretwo}).

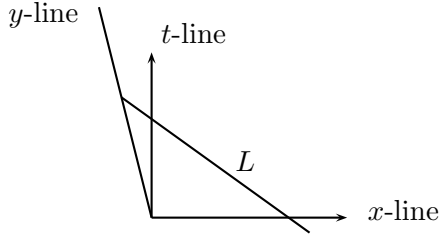
\begin{figure}
\begin{pspicture}(-2,-0.2)(4.8,2.9)
\psline{->}(0,0)(0,2.2)\psline{->}(0,0)(2.6,0)
\psline(-0.4,1.6)(2.1,-0.2)
\psline(0,0)(-0.7,2.8)\uput[135](1.5,0.5){$L$}
\uput[0](2.7,0.1){$x$-line} \uput[45](0,2.2){$t$-line}
\uput[180](-0.8,2.7){$y$-line}
\end{pspicture}
\caption{$(\delta ,\nu )$-plane}
\label{figuretwo}
\end{figure}

There is a line $L$ containing two distinct exponents
$x^{\alpha_i}t^{\beta_i}$, $i=1$, $2$ of $f$ with
\[
\alpha_1>0,\qquad
\beta_1\ge 0,\qquad
\alpha_2<0,\qquad
\beta_2\ge -k\alpha_2,
\]
and with all exponents of $f$ contained in the closed half-plane bounded
below by $L$.
We define $\ell=-(\beta_1-\beta_2)/(\alpha_1-\alpha_2)$.
Notice that $\ell<k$, since the half-plane contains a lattice point
corresponding to a power of $y$.
If, among all exponents $x^\alpha t^\beta$, $(\alpha,\beta)\in L$, there is
some pair for which the coefficient vectors of $f$ are linearly
independent, then $\PP Tf(x,y,t)$ approaches different points along paths
\[ (x,y) = (ct^\ell, c^{-1}t^{k-\ell}) \]
as $c\in \bk^\times$ varies.
In this case, $\PP Tf(p)$ is not well-defined.

Otherwise there is a constant tangent direction along such paths,
which without loss of generality we take to be
$[1,0,\ldots,0]\in \PP^{r-1}=\PP T_{f(p)}X$.
Putting
\[x=zt^\ell, \qquad y=z^{-1}t^{k-\ell},\]
we have
\[ f(zt^\ell, z^{-1}t^{k-\ell},t)=t^{\frac{\alpha_1\beta_2-\alpha_2\beta_1}{\alpha_1-\alpha_2}}g(z,t) \]
for some $g=(g_1,\ldots,g_r)\in \bk[z,z^{-1}][[t]]^{\oplus r}$ with
$g_i(z,0)=0$ for $i\ge 2$, while $g_1(z,0)$ is a Laurent polynomial
containing at least one positive and one negative power of $z$.
It follows that there exists $c\in\bk^\times$
such that $\frac{\partial g_1}{\partial z}$ vanishes
at $(c,0)$.
We are in the situation of Case (1):
$f$ is generically unramified, and $\PP Tf$ approaches
a point different from $[1,0,\ldots,0]$, on any
irreducible component of the curve defined by
$\frac{\partial g_1}{\partial z}=0$.

\emph{Case (3).} Set $p=f(E)$.
We suppose that $(\PP Tf)(E)$ is the point
$[1,0,\ldots,0]\in \PP^{r-1}=\PP T_pX$.
We may choose a projection to $\PP^1$ from $X$,
defined in a neighborhood of $p$, so that with $p'$ the image of $p$ in $\PP^1$
the image in $T_{p'}\PP^1$ of $(1,0,\ldots,0)\in T_pX$ is different from zero,
the composite $f':\cC\to\PP^1$ is
well-defined, stable, and over $\Spec K$ is a stable ramified map,
and the only irreducible components of $\cC_0$ mapping by $f'$ to $p'$
are those mapping to $p$ by $f$.
Now we consider the stable ramified limit
$\cC'\to \cW$ over $\Spec R$ of $\cC'\to \PP^1$ over $\Spec K$.
There exists a component $E'\subset \cC'_0$ over
$E\subset \cC_0$
(i.e.\ mapping into $E$ via stabilization $\cC'\to\cC$)
containing a smooth point that is a ramification point of
$\cC'_0\to \cW_0$ and is not the
limit of any of the $p_i$.
A pair of general sections approaching $E'$
gives rise to a new target space $X[2]^+$ and a new stable map
limit $\widetilde{\cC}\to X[2]^+$.

The hypothesis concerning $(\PP Tf)(E)$ implies that
the image of every irreducible component of $\widetilde{\cC}_0$ over $E$
meeting $(X[2]^+)^{\mathrm{sm}}$ is a line intersecting
$(X[2]^+)^{\mathrm{sing}}=\PP^{r-1}$ at $[1,0,\ldots,0]$.
Now, considering the new target space $\PP^1[2]^+$ with its stable map
limit $\widetilde{\cC}'$, on which there is the component
$\widetilde{E}'$ corresponding to $E'\subset \cC'_0$,
we see using Lemma \ref{generalpairsections} that there is a corresponding component
$\widetilde{E}\subset \widetilde{\cC}_0$, over $E$, having image a line
in $X[2]^+$ and a ramification point which is not the limit of any of the $p_i$.
We conclude by the argument of Case (1).
\end{proof}

\bigskip

\begin{proof}[Proof of Proposition \ref{proper} when $\dim X\ge 2$.]
We assume that the target $\mathcal{W}$ is $X[m]^+|_{\Spec K}$ for some
given $\Spec K\to X[m]$.
By Lemma \ref{TLMC} we have the tangent line map
$\PP(Tf):\cC\to \PP(TX)$ and hence its ordinary stabilization with
the $n$ markings
\[ \PP(Tf)':\cC'\to \PP(TX) \]
where $\cC'$ is a suitable contraction of $\cC$.
With stable map extension
$\overline{\cC}'\to \PP(TX)$ (with $n$ markings)
we apply Lemma \ref{generalpairsections} to each component $E_i$ of
$\overline{\cC}'_0$ to get open sets
$\mathcal{U}^{(i)}\subset \overline{\cC}'\times \overline{\cC}'$ where
we may suppose $\mathcal{U}^{(i)}_0\subset
(E_i^{\mathrm{sm}}\times E_i^{\mathrm{sm}})\setminus
\Delta_{E_i^{\mathrm{sm}}}$ for $i=1$, $\ldots$, $c$ where $c$ denotes the
number of components of $\overline{\cC}'_0$.
There is an open subset $\mathcal{U}$ of the product of the
$\mathcal{U}^{(i)}$, with $\mathcal{U}_0\ne\emptyset$, on which we may
apply Proposition \ref{PairOper} to obtain
$\Spec K\to X[m,2c]$ with $X[m,2c]^+|_{\Spec K}\cong \mathcal{W}$.

Arguing as in the proof of Lemma \ref{generalpairsections}, we see that
for a general section $\Spec R\to \mathcal{U}$ the
stable map extension
$\overline{\cC}''\to X[2c]^+\times \PP(TX)$ of the
map $\cC\to X[2c]^+ \times \PP(TX)$
(the composite $\cC\to X[m,2c]^+\to X[2c]^+$ on the first factor,
and the map $\PP(Tf)$ on the second factor)
has the property that the composite
$\overline{\cC}''\to X[2c]^+\times \PP(TX)\to X[2c]^+$
remains stable.
Hence the same holds if we extend using $n+2c$ sections; we therefore
obtain the stable map extension $\overline{\cC}''\to X[n+2c]^+$.

Over $K$ we have $X[m,n+2c]^+\to X[n+2c]^+$ contracting only ruled components;
correspondingly on $\cC\to \cC'$ each contracted component has a line
non-fiber image
of some end rational component of $\cC$, joined by a chain of
$\PP^1$'s to a component that survives in $\cC'$.
Let $j$ denote the total number of collapsed ruled components.
We may put back these collapsed components by adding $j$ sections.
Indeed, if we let $\mathcal{I}$ denote the set of subsets of
$N\sqcup \{1,\ldots,2c\}$ describing the image of $\Spec K\to X[n+2c]$,
then the target $X[m,n+2c]^+|_{\Spec K}$ is contained in a closed substack
of $\X[n+2c+j]_{n+2c}$ isomorphic to
\[\bigg(\bigcap_{I\in\mathcal{I}}\Delta_I\bigg)\times (B\mathbb{G}_m)^j,\]
where $B\mathbb{G}_m$ denotes the classifying stack of $\mathbb{G}_m$.
Hence we have $\X[n+2c+j]_{n+2c}^+|_{\Spec K}\cong\mathcal{W}$, determined up
to an element of $(\mathbf{k}((t))^\times)^j$.
The isomorphism class over $\Spec R$ is therefore determined by
$j$ integers.
These are uniquely determined by the requirement that
each line image of a rational end component mentioned above should in
the limit tend to a non-fiber image, with at least one line per
component tending in the limit to a non-fiber image not contained in the
relative singular locus.
For such an $R$-point of $\X[n+2c+j]_{n+2c}^+$ we consider a lift
over $\Spec K$ to $X[n+2c+j]$ which factors through $X[m:n+2c+j]$
(compatibly with $\Spec K\to X[m]$)
and use this to determine a stable map limit
$\overline{\cC}'''\to X[n+2c+j]^+$ over $R$.

Using Lemmas \ref{pullout} and
\ref{analysis} it follows from the stability condition on
$\overline{\cC}'''\to X[n+2c+j]^+$ that in the special fiber there
are no contracted components.
Some finite number of components may be mapped to lines in relative singular
loci of $X[n+2c+j]^+/X[n+2c+j]$.
If the number of such components is $k$ then
we obtain by repeated use of Lemma \ref{pullout} a
target $\widetilde{\mathcal{W}}:=X[n+2c+j+k]^+|_{\Spec R}$
with isomorphism
$\widetilde{\mathcal{W}}|_{\Spec K}\cong \mathcal{W}$, such that the stable map limit
$\widetilde{C}\to \widetilde{\mathcal{W}}$ fulfills the conditions of the
proposition.
Indeed, AC is automatically satisfied by Proposition 2.2 of \cite{J.Li};
PRIC holds by Lemma \ref{analysis};
we have DPC because the $n$ sections are among those defining the
target $\widetilde{\mathcal{W}}$;
and SC holds by construction, using Lemma \ref{analysis}.
\end{proof}

Finally we come to the main result of the paper.

\begin{Thm}\label{MainThm} The stack $\overline{\mathfrak{U}}_{g,\mu }(X,\beta)$
of  $(g, \beta,\mu)$-stable ramified maps
to FM degeneration spaces of
$X$ is a proper
Deligne-Mumford stack over $\bk$.
\end{Thm}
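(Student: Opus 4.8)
The statement combines three properties: $\overline{\mathfrak{U}}_{g,\mu}(X,\beta)$ is (i) a Deligne-Mumford stack of finite type, (ii) separated, and (iii) universally closed over $\bk$. Property (i) is already established by the Corollary to Proposition \ref{AC}. For the remaining two, I would invoke the valuative criteria as formulated in \cite{DM}: it suffices to check the existence part (every stable ramified map over the fraction field $K$ of a DVR $R$ extends, after a finite base change, to a stable ramified map over $R$) and the uniqueness part (such an extension is unique up to unique isomorphism, again after a finite base change). As noted in the text, we may take $R=\bk[[t]]$ throughout.

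\textbf{Step 1: Existence.} This is exactly Proposition \ref{proper}, whose proof is already laid out in the excerpt: reduce to the case of a fixed target $X[m]^+|_{\Spec K}$, form the tangent line map $\PP(Tf)$ and its stable-map stabilization, then use Lemmas \ref{generalpairsections}, \ref{pullout}, and \ref{analysis} (together with Propositions \ref{PairOper} and \ref{univ}) to introduce enough auxiliary sections so that the ordinary stable-map limit can be taken with no collapsed components and with AC, PRIC, DPC, SC all satisfied. So existence is in hand.

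\textbf{Step 2: Separatedness (uniqueness of limits).} This is the part the excerpt has \emph{not} yet carried out, and I expect it to be the main obstacle. Suppose $(\tilde f_1,\tilde{\mathbf p}_1)$ and $(\tilde f_2,\tilde{\mathbf p}_2)$ over $R$ both restrict to $(f,\mathbf p)$ over $K$. One wants an isomorphism of the two extensions fixing the generic fiber. The strategy is: first compare the two targets $\mathcal W_1,\mathcal W_2$ — each is an FM degeneration space over $R$ with a common generic fiber — and show they are canonically identified. Here Proposition \ref{univ} is the key tool: an FM space over $R$ with its sections is determined by the induced map to $X[\ell]$ for $\ell$ large, so one must show the two maps $\Spec R\to X[\ell]$ agree, which reduces (by properness of $X[\ell]$) to showing they agree over $K$ — true by hypothesis — provided one has arranged that no extra contraction or sprouting is forced on the target side. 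Concretely I would argue that the number and levels of the screens of $\mathcal W_{1,0}$ and $\mathcal W_{2,0}$ are forced by the geometry of $f$ over $K$: each screen must carry either a marked point or a non-fiber/non-line image of a component of $\mathcal C$, and the Stability Condition together with Lemma \ref{analysis} (which shows precisely \emph{when} a screen or a jump is unavoidable) pins this down. Once $\mathcal W_1\cong\mathcal W_2=:\mathcal W$ canonically, one compares the domains: both $\tilde{\mathcal C}_i\to\mathcal W$ are families of prestable curves agreeing over $K$, so on the underlying ordinary stable maps to $X[\ell]^+$ one applies separatedness of $\overline{\mathcal M}_{g,n}(X[\ell]^+,\beta)$ — no, more carefully, of $\overline{\mathcal M}_{g,n}(X[\ell]^+/X[\ell],\beta)^\mu$, which is separated by Proposition \ref{AC} — to conclude the two extensions coincide. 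The subtlety is that a priori the two extensions might differ by a "twist" in the $\mathbb G_m$-directions associated to ruled or end screens; this is ruled out precisely by the normalization conditions imposed in the proof of Proposition \ref{proper} (the requirement that each line image of a rational end component tend to a non-fiber image, at least one per component not in the relative singular locus), which make the limit canonical.

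\textbf{Step 3: Conclusion.} Combining Steps 1 and 2 with the finite-type Deligne-Mumford property, the valuative criterion of \cite{DM} gives that $\overline{\mathfrak{U}}_{g,\mu}(X,\beta)$ is proper. I expect the bulk of the actual writing to go into Step 2, and within it, into the careful bookkeeping that identifies the two targets and then rigidifies the $\mathbb G_m$-ambiguities — everything else is assembled from results already proved in the excerpt.
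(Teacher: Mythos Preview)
Your overall architecture matches the paper's: existence is Proposition \ref{proper}, and for separatedness the essential reduction is to show that the two targets $\mathcal{W}_1$, $\mathcal{W}_2$ are isomorphic by an extension of $\psi$, after which the domains agree by separatedness of ordinary stable maps. That reduction is exactly what the paper does.

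The gap is in your argument that the targets agree. You invoke Proposition \ref{univ}, but that requires an isomorphism preserving enough \emph{sections} to make the FM spaces stable, and the $\mathcal{W}_i$ as given do not carry such sections. Saying the screen structure is ``pinned down'' by SC and Lemma \ref{analysis} is a heuristic, not a mechanism: it produces no way to manufacture \emph{corresponding} auxiliary sections on the two sides, and your remark that the $\mathbb{G}_m$-ambiguities are removed by the normalization choices in the proof of Proposition \ref{proper} is circular, since the two given extensions need not have arisen by that construction. The missing device is the Tangent Line Map (Lemma \ref{TLMC}). The paper notes that the stabilizations $(\PP Tf_i)':\cC'_i\to\PP(TX)$ are ordinary $n$-pointed stable maps agreeing over $K$, hence canonically isomorphic over $R$ by separatedness of the usual moduli space; the resulting $\varphi':\cC'_1\stackrel{\sim}{\to}\cC'_2$ is what allows one to \emph{transport} sections. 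One then chooses two general sections through each component of the special fiber of $\cC'_1$, carries them to $\cC'_2$ via $\varphi'$, pushes forward by $f_i$ to obtain $n+2c$ matching sections of the $\mathcal{W}_i$, and applies Proposition \ref{PairOper} to identify contractions of the targets. The remaining $j+k$ sections (for ruled components collapsed over $K$ and for components landing in the relative singular locus) are then added exactly as in the existence proof, and one verifies the resulting maps land in $X[n_i:n+2c+j+k]$ to conclude $\mathcal{W}_1\cong\mathcal{W}_2$.
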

\begin{proof}
It remains only to verify the valuative criterion for separatedness;
then properness follows from Proposition \ref{proper}.
Assume that for $i=1$, $2$ we have stable ramified maps
$f_i:\cC_i\to \mathcal{W}_i$,
with $\mathcal{W}_i=g_i^*X[n_i]^+$ for some
$g_i:\Spec R\to X[n_i]$, and an isomorphism of stable ramified maps over $K$,
i.e., pair of isomorphisms $(\varphi,\psi)$
\[\varphi:\cC_1|_{\Spec K}\to \cC_2|_{\Spec K},
\qquad
\psi:\mathcal{W}_1|_{\Spec K}\to \mathcal{W}_2|_{\Spec K}\]
satisfying
\[\psi\circ f_1|_{\Spec K}=f_2|_{\Spec K}\circ\varphi\]
where $\psi$ fixes $X$ and $\varphi$ preserves the $n$ sections of the
$\cC_i$.
By the uniqueness of the extension of ordinary stable maps, it suffices to
verify that targets $\mathcal{W}_1$ and $\mathcal{W}_2$ are isomorphic by
an extension of $\psi$. To do so, we use the Tangent Line Map Condition.

By Lemma \ref{TLMC}, we have the tangent line map
$$\PP (Tf_i) : \cC _i \rightarrow \PP (TX)$$
and hence its ordinary stabilization with the $n$ markings
$$ (\PP(Tf_i))' : \cC _i' \ra \PP (TX)$$
where $\cC _i'$ is a suitable contraction of $\cC _i$. Since the
$n$-pointed stable maps $(\PP Tf_1)'$ and $(\PP Tf_2)'$ are
equivalent over $K$, they are equivalent over $R$. Hence there is
an isomorphism $\varphi ': \cC _1' \ra \cC _2'$ satisfying that:
\begin{itemize}
\item the diagram
 \[\xymatrix{
              \cC_1|_K \ar[r]^\varphi \ar[d] & \cC_2|_K \ar[d]\\
              \cC_1'|_K \ar[r]^{\varphi'|_K} & \cC_2'|_K
 }\] commutes;

 \item $(\PP Tf_1)'= (\PP Tf_2)'\circ\varphi '$; and

 \item the $n$ markings are preserved under $\varphi '$.

 \end{itemize}

We consider all components
of the closed fiber of $\cC'_1$;
for each such component, there is the corresponding one in the closed fiber of
$\cC _1$. Consider two general points on each such component and
two sections passing through those points. These sections together
with the $n$ markings form $n+2c$ sections of $\cC _1\ra \Spec R$,
where $c$ is the number of components of the closed fiber of $\cC'_1$. By
$\varphi '$, we obtain the corresponding sections of $\cC _2 \ra
\Spec R$.  Using DPC, we may assume that
the images of those $n+2c$ sections under $f_i$ are pairwise distinct.
Now
applying Proposition \ref{PairOper} to $g_i$ with those $n+2c$ sections, we
obtain an isomorphism of contractions of
$\mathcal{W}_1$ and
$\mathcal{W}_2$.

Let $j$ denote the number of contracted components of
$\mathcal{W}_i\to X[n+2c]^+$ over $K$,
and let us add $j$ new sections as described in the proof of
Proposition \ref{proper}.
Notice that the choice of expanded target, up to isomorphism, is determined
completely by restriction over $K$ of the given $(\cC_i,\mathcal{W}_i)$,
hence we obtain an isomorphism of contractions of $\mathcal{W}_i$
restricting to an isomorphism over $K$.

By the uniqueness of stable map extension, the number of components
of the special fiber of $\cC_i$ landing in the relative singular locus
of $X[n+2c+j]^+$ is the same for $i=1$, $2$; let us call it $k$ and
now add $k$ new sections of $\cC_1\to \Spec R$ and the corresponding ones of
$\cC_2\to \Spec R$.
We claim that the induced maps
$\Spec R\to X[n_i,n+2c+j+k]$ are in fact in
$X[n_i:n+2c+j+k]$.
Stability is clear for any screen containing a non-line image,
since it will then have two out of the $2c$ markings.
It remains only to consider ruled screens containing none of the $n$ markings
and only line images.
Then there is some non-fiber line image.
If it is a limit of non-fiber line images on a ruled component over $K$
then it will be stable by one of the $j$ sections.
Otherwise the non-fiber line maps to a line in a singular component of
$X[n+2c+j]^+$, hence there will be one of the $k$ sections.
This shows that $\mathcal{W}_1$ and $\mathcal{W}_2$ are isomorphic under
an extension of $\psi$.
\end{proof}

%%
%%
%%\begin{EG}
%%Let $f: \Spec \bk [[w,t]] \ra Y$ be a map such that the image of
%%the generic fiber over $\bk[[t]]$ is not in a given divisor $D$ of
%%a nonsingular variety $Y$ and the closed fiber is mapped into $D$.
%%Let $u$ be a local defining equation of $D$. Suppose that
%%$f^*(u)=a(w,t)t^k$ for some $a(w,t)\in \bk [[w,t]]$ such that
%%$a(w,0)$ is nonzero. Then the closed fiber of the extension of
%%$\Spec \bk[[w]]((t)) \ra Y\times \Spec \bk((s))\subset
%%\mathrm{Bl}_{D\times 0} (Y\times \Spec \bk[[s]])$ lies on the new
%%screen, where $s=t^k$.
%%\end{EG}
%%
%%\begin{EG}
%%Consider a curve $S$ over $\bk$ and an embedding $g: S\ra
%%\Delta_{\{1,2\}}\subset X[3]$ such that $S$ meets
%%$\Delta_{\{1,2,3\}}$ transversally, say at $q\in S$. Then
%%$g^*X[3]^+$ is isomorphic to
%%\[\mathrm{Bl}_{L}g^*\mathrm{Bl}_{\Delta _{\{ 1,2,3\}^+}}X[3]\times
%%X \] where $L$ is the singular locus at $q$. This can be seen by
%%using deformation theory or by Stein factorization theorem.
%%\end{EG}

\section{Ramified Gromov-Witten Invariants}\label{Deformation}
\subsection{Obstruction theory}
The approach to relative obstruction theory suggested by J. Li at the
beginning of \S1.2 of \cite{J.Li2} can be worked out in the case of the
moduli stack of stable ramified maps using Olsson's
deformation theory of log schemes \cite{Olsson3}.
%J. Li suggests an approach to relative obstruction theory in
%\cite{J.Li2}, beginning of \S1.2, which can be worked out in the case of the
%moduli stack of stable ramified maps, using Olsson's
%deformation theory of log schemes \cite{Olsson3}.
If $(\mathcal{C}\ra S, \mathcal{W}\ra S, f : \mathcal{C}\ra \mathcal{W})$ is
a family of ramified stable maps then we have natural log
structures $M^{\mathcal{C}/S}$ on $\mathcal{C}$,
$M^{\mathcal{W}/S}$ on $\mathcal{W}$ and $N^{\mathcal{C}/S}$ and $N^{\mathcal{W}/S}$
on $S$ making $(\mathcal{C},M^{\mathcal{C}/S})\ra (S,N^{\mathcal{C}/S})$
and $(\mathcal{W},M^{\mathcal{W}/S})\ra (S,N^{\mathcal{W}/S})$
log smooth morphisms.
Following \S3B of \cite{Mo} there is a canonical log structure $N$ on $S$,
associated to the monoid pushout
$N^{\mathcal{C}/S}\oplus_{N'}N^{\mathcal{W}/S}$ where $N'$ is the
submonoid of $N^{\mathcal{C}/S}\oplus N^{\mathcal{W}/S}$ generated by
$(m\cdot \log(s'),\log(s))$ for every node of the geometric fibers of
$\mathcal{C}\ra S$, and if we let $(\mathcal{C},M)$ denote the log scheme
obtained as the fiber product
$(\mathcal{C},M^{\mathcal{C}/S})\times_{(S,N^{\mathcal{C}/S})}(S,N)$
then there is, canonically, $f^*M^{\mathcal{W}/S}\ra M$ making
\[\xymatrix{
{(\mathcal{C},M)} \ar[r] \ar[d] & {(\mathcal{W},M^{\mathcal{W}/S})} \ar[d] \\
{(S,N)} \ar[r] & {(S,N^{\mathcal{W}/S})}
}\]
a commutative diagram of fine log schemes \cite{Kato}.

\begin{Prop}\label{defo}
Let $S=\Spec A$ and let $I$ be an $A$-module.
Consider a square zero extension $B$ of $A$ by $I$.
Let $f : \mathcal{C} \ra \mathcal{W} $ be a stable ramified map over $S$.
Let $(\tilde{\mathcal{C}}\ra \tilde S,\{\tilde p_1, \ldots,
\tilde p_n \})$ and $(\tilde{\mathcal{W}}\ra \tilde S, \tilde{\mathcal{W}}\ra X)$
be extensions of $(\mathcal{C}\ra S, \{p_1, \ldots, p_n\})$ and
$(\mathcal{W}\ra S, \mathcal{W}\ra X)$ over $\tilde{S}=\Spec B$,
let $\tilde q_i : S \ra \mathcal{W}$ be extensions of
$q_i := f\circ p_i$ for $i=1$, $\ldots$, $n$, and
let $\tilde{N}$ be a fine log structure on $\tilde{S}$ extending the
log structure $N$ on $S$, together with morphisms
$N^{\tilde{\mathcal{C}}/\tilde{S}}\ra \tilde{N}$ and
$N^{\tilde{\mathcal{W}}/\tilde{S}}\ra \tilde{N}$ extending the ones over $S$.
Then there is a natural element
\[ob(f,I)\in H^1(\mathcal{C},f^*T^\dagger_{\mathcal{W}}(-\mu_1 p_1-\cdots-\mu_n p_n)
\otimes_{\mathcal{O}_S}I)\]
of obstruction to extension to a stable ramified map
$\tilde f : \tilde{\mathcal{C}} \ra \tilde{\mathcal{W}}$ over
$\tilde S$ such that $\tilde N'$ and
$\tilde N'\ra N^{\tilde{\mathcal{W}}/\tilde{S}}$ are compatible with the given
$N^{\tilde{\mathcal{C}}/\tilde{S}}\ra \tilde{N}$ and
$N^{\tilde{\mathcal{W}}/\tilde{S}}\ra \tilde{N}$.
When the obstruction vanishes, the extensions $\tilde f$ satisfying the
compatibility are a torsor under
\[H^0(\mathcal{C},f^*T^\dagger_{\mathcal{W}}(-\mu_1 p_1-\cdots-\mu_n p_n)
\otimes_{\mathcal{O}_S}I).\]
\end{Prop}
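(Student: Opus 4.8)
The plan is to present the extension problem as a standard deformation problem for a morphism of log smooth schemes with \emph{fixed} source and target extensions, and then to identify the coherent sheaf on $\cC$ that controls it. First I would recall, following \S3B of \cite{Mo} and Olsson's deformation theory of log schemes \cite{Olsson3}, that giving a stable ramified map over $S$ is equivalent to giving the morphism of fine log schemes $(\cC,M)\to(\cW,M^{\cW/S})$ over $(S,N)\to(S,N^{\cW/S})$ displayed above, together with the conditions PRIC, DPC and SC; in particular AC is built into the log structures, so that every deformation of the underlying morphism which is compatible with $\tilde N$ and with $N^{\tilde\cC/\tilde S}\to\tilde N$ and $N^{\tilde\cW/\tilde S}\to\tilde N$ automatically satisfies AC, while DPC and SC are open conditions and therefore persist under any infinitesimal deformation. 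Thus one is reduced to controlling the extensions $\tilde f:(\tilde\cC,\tilde M)\to(\tilde\cW,M^{\tilde\cW/\tilde S})$ over $(\tilde S,\tilde N)\to(\tilde S,N^{\tilde\cW/\tilde S})$ which reduce to $f$, keep $\tilde\cC$, $\tilde\cW$ and the $\tilde q_i$ fixed, and satisfy PRIC.

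Next I would run the local analysis on an \'etale chart of $\cC$. Since $(\tilde\cW,M^{\tilde\cW/\tilde S})\to(\tilde S,N^{\tilde\cW/\tilde S})$ is log smooth, local extensions $\tilde f$ exist, and locally the set of such extensions is a torsor under a subsheaf of $f^*T^\dagger_{\cW}\otimes_{\mathcal O_S}I$, where $T^\dagger_{\cW}$ is the log tangent sheaf of $(\cW,M^{\cW/S})/S$, dual to $\Omega^\dagger_{\cW/S}$. Away from the nodes of $\cC$ and the markings, this subsheaf is all of $f^*T^\dagger_{\cW}\otimes_{\mathcal O_S}I$ (and $T^\dagger_{\cW}=T_{\cW/S}$ there); at a node, compatibility of the log morphism with the maps to $\tilde N$ again yields exactly $f^*T^\dagger_{\cW}\otimes_{\mathcal O_S}I$, because the logarithmic tangent sheaf already encodes the admissible form $z_1\mapsto\alpha_1x^m$, $z_2\mapsto\alpha_2y^m$. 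At a marking $p_i$, where $\cW$ is smooth with trivial log structure near $q_i=f(p_i)$ by DPC, I would compute directly: writing $\cC$ locally as $\Spec A[[x]]$ with $p_i=\{x=0\}$ and $\cW$ locally as $\Spec A[[z_1,\ldots,z_r]]$ with $q_i$ the origin, PRIC gives $f^*z_j=c_j(x)$ with $(c_1,\ldots,c_r)=(x^{\mu_i})$; replacing $c_j$ by $c_j+\delta_j$ with $\delta_j\in I[[x]]$ and $\delta_j(0)=0$ (so that $\tilde q_i$ is preserved), the perturbed map again satisfies PRIC precisely when $\delta_j\in x^{\mu_i}I[[x]]$ for every $j$. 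Hence, viewed inside $f^*T_{\cW/S}\otimes_{\mathcal O_S}I$, the local deformation must vanish to order $\mu_i$ along $p_i$, so the controlling sheaf is globally
\[ \mathcal F:=f^*T^\dagger_{\cW}(-\mu_1p_1-\cdots-\mu_np_n)\otimes_{\mathcal O_S}I. \]

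Finally I would globalize in the usual way. On a cover of $\cC$ carrying local extensions $\tilde f_\alpha$, the differences $\tilde f_\alpha-\tilde f_\beta$ are sections of $\mathcal F$ over the overlaps and constitute a \v{C}ech $1$-cocycle; its class $ob(f,I)\in H^1(\cC,\mathcal F)$ is independent of all choices and functorial in the data, and it vanishes precisely when the $\tilde f_\alpha$ can be adjusted so as to glue to a global stable ramified map $\tilde f$ of the required form. When it vanishes, the set of such $\tilde f$ is a torsor under $H^0(\cC,\mathcal F)$. (Flatness of $\cC\to S$ and local freeness of $T^\dagger_{\cW}$ ensure that $\mathcal F$ and its cohomology behave as expected.) The hard part will be the logarithmic bookkeeping of the second step: verifying that the deformations compatible with $\tilde N$, $N^{\tilde\cC/\tilde S}\to\tilde N$ and $N^{\tilde\cW/\tilde S}\to\tilde N$ are exactly the AC-preserving ones and are classified locally at the nodes by $f^*T^\dagger_{\cW}\otimes_{\mathcal O_S}I$ --- i.e.\ transcribing Olsson's log cotangent complex computations \cite{Olsson3} to the present situation --- together with the existence of local PRIC-preserving lifts and the order-$\mu_i$ vanishing at the markings; once those local facts are established, the global argument is routine.
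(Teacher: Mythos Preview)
Your argument is correct and reaches the same conclusion, but the paper handles PRIC by a different and more uniform device. Rather than a direct local computation at the markings, the paper replaces the target $\cW$ by the non-separated union $\cW \cup_{\cW \setminus \{q_1,\ldots,q_n\}} \mathrm{Bl}_{\{q_1,\ldots,q_n\}} \cW$, putting on the second piece the standard log structure plus the exceptional divisor, and replaces $f$ by its lift $f'$ to the blowup near each $p_i$ (this lift exists precisely by PRIC, cf.\ Remark~\ref{RmkRam}). With this modified target PRIC is absorbed into the log morphism condition, so Olsson's Theorem~5.9 in \cite{Olsson3} applies once and for all; the twist $-\mu_i p_i$ then falls out of the identification $f'^*T^\dagger_{\mathrm{Bl}_{\{q_i\}}\cW}\cong f^*T^\dagger_{\cW}(-\mu_i p_i)$ near $p_i$. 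Your hands-on computation at the markings is equally valid and arguably more transparent about where the twist comes from, but it requires you to check separately that local PRIC-preserving lifts exist (you do this implicitly by lifting $c_j=x^{\mu_i}c'_j$ termwise) and to splice the marking analysis onto the nodal log analysis by hand. The paper's blowup trick buys uniformity---a single log deformation problem on a single modified target---at the small cost of working over a non-separated scheme.
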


\begin{proof}
We use Theorem 5.9 in \cite{Olsson3}.
To enforce PRIC, we replace $\mathcal{W}$ with the (non-separated) union
\[\mathcal{W}\cup_{\mathcal{W}\setminus\{q_1,\ldots,q_n\}}
\mathrm{Bl}_{\{q_1,\ldots,q_n\}}\mathcal{W}\]
where the log structure is the standard one on $\mathcal{W}$ and
the standard one plus the exceptional divisor on $\mathrm{Bl}_{\{q_1,\ldots,q_n\}}\mathcal{W}$.
Then $f$ gets replaced by a map $f'$ sending
$\mathcal{C}\setminus \{p_1,\ldots,p_n\}$ to $\mathcal{W}$ and by the
lift to $\mathrm{Bl}_{\{q_1,\ldots,q_n\}}\mathcal{W}$
in a neighborhood of $p_i$ (which is well-defined by Remark \ref{RmkRam}),
and near $p_i$ we have an identification of log tangent sheaves
$f'^*T^\dagger_{\mathrm{Bl}_{\{q_1,\ldots,q_n\}}\mathcal{W}}\cong
f^*T^\dagger_{\mathcal{W}}(-\mu_i p_i)$.
\end{proof}

By standard machinery we have a perfect obstruction theory
(\cite{BF})
\[ R \pi_*(f^*T^\dagger_{\mathcal{W}}(-\mu_1 p_1-\cdots-\mu_n p_n))^\vee \ra
L^\bullet_{\overline{\mathfrak{U}}_{g, \mu }(X,\beta )/\mathfrak{B}} \]
relative over the base stack $\mathfrak{B}$ of
curves (prestable $n$-pointed of genus $g$), FM spaces of $X$ with
$n$-tuples of smooth pairwise distinct points,
fine log structures, and pairs of morphisms of log structures
\[(\mathcal{C}\ra S, \mathcal{W}\ra S, N,
N^{\mathcal{C}/S}\ra N,
N^{\mathcal{W}/S}\ra N).\]
By Theorems 1.1 and 4.6 and Corollary 5.25 of \cite{Olsson1}
and Proposition 2.11 of \cite{Olsson3}
the stack $\mathfrak{B}$ is algebraic and pure-dimensional, so by
\cite{BF} there is a virtual fundamental class
$[\overline{\mathfrak{U}}_{g,\mu }(X,\beta )]^{\mathrm{vir}}$.
Alternative ways to define a virtual fundamental class are
by adding auxiliary log structures (\cite{Kim}, see also \cite{Lopez})
or orbifold structures (\cite{AF}).

Using the natural morphism
\[ Te_i: \overline{\mathfrak{U}}_{g,\mu}(X,\beta ) \ra \PP TX ,\]
we define \emph{ramified Gromov-Witten invariants} by
\[ \int _{[\overline{\mathfrak{U}}_{g,\mu}(X,\beta
)]^{\mathrm{vir}}} \prod \psi _i^{a_i} Te_i^* (\gamma _i) ,\] where
$\psi _i$ are gravitational descendants associated to the $i$-th
marking and $\gamma _i$ are cohomology classes of $\PP TX$.
These invariants are deformation invariant just like the usual Gromov-Witten invariants. 
When $\mu=(1,\ldots,1)=(1^n)$ we write
$\overline{\mathfrak{U}}_{g,n}(X,\beta)$ for
$\overline{\mathfrak{U}}_{g,\mu}(X,\beta)$ and speak of
\emph{unramified Gromov-Witten invariants}.

\subsection{Pandharipande's conjectures}
\label{rahul} In this subsection let ${\bf k}$ be the field of complex numbers.
For threefold targets,
R. Pandharipande has proposed a conjectural link between
unramified Gromov-Witten invariants and certain integer quantities that
can be defined with usual Gromov-Witten theory.
A consequence would be the following statement.

\begin{Conj}
\label{conj1}
Let $X$ be a smooth projective three-dimensional algebraic variety
and $\beta$ a curve class on $X$ with $\int_\beta c_1(T_X)>0$.
Then the unramified Gromov-Witten invariants on $X$ defined by
integral cohomology classes on $X$, without gravitational descendants, are
integers.
\end{Conj}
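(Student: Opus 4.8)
The plan is to reduce the integrality statement to a count of automorphism-free points, exploiting the deformation invariance of the invariants together with the special geometry of unramified stable maps. First I would use deformation invariance (noted just before the conjecture) to replace $X$ by a generic member of its deformation class, or---working in the almost complex category as in the paper's symplectic motivation---to choose a generic compatible almost complex structure $J$. Cutting the virtual class by integral cohomology classes $\gamma_i$ pulled back from $X$ via $\mathrm{ev}_i$, with the descendants $\psi_i$ absent and $\sum_i\deg\gamma_i$ equal to the virtual dimension, reduces the problem to a virtual count in dimension zero. The goal is then to show that for the generic choice this virtual count is represented by a genuine finite set of reduced points, each contributing with multiplicity one, with no fractional automorphism factors.

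The decisive structural input is that stable ramified maps are finite maps, which I would use to rule out the two standard sources of non-integrality in ordinary Gromov--Witten theory. Finiteness forbids contracted components of the domain, so the rational multiplicities coming from collapsed genus-$g$ components with their automorphisms do not occur; and in the unramified case $\mu=(1^n)$ the map $f$ restricts to an unramified immersion onto its image. Under the positivity hypothesis $\int_\beta c_1(T_X)>0$ together with generic incidence conditions, I would argue that the contributing curves are immersions with distinct marked points and no nontrivial automorphism fixing $X$ and the markings, so that every contributing point of $\overline{\mathfrak{U}}_{g,n}(X,\beta)$ has trivial stabilizer. Combined with a regularity statement---vanishing of the obstruction space $H^1(\mathcal{C},f^*T^\dagger_{\mathcal{W}}(-p_1-\cdots-p_n))$ at each such point, which I would deduce for the generic choice from the perfect obstruction theory of Proposition \ref{defo}---this identifies the virtual class with the ordinary fundamental class of a smooth zero-dimensional stack, so the invariant becomes an honest count of points, hence an integer.

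The main obstacle is controlling the boundary, where the target is a nontrivial FM degeneration space and the domain carries higher-level screens. Such strata a priori carry the continuous automorphism groups $\mathbb{G}_m$ and $\mathbb{G}_a^r\rtimes\mathbb{G}_m$ recorded in Section \ref{X[n]}, which could contribute non-integrally. I would first invoke the Stability Condition to rigidify: every ruled or end screen must carry a marking or a non-fiber image, eliminating the continuous automorphisms, while the Distinct Points Condition separates the marked images. The harder point is to show that, for generic $X$ and a dimension-zero cut, these boundary strata either are disjoint from the constrained locus---by a virtual-dimension estimate using $\int_\beta c_1(T_X)>0$ to bound the degrees that can distribute onto the screens---or meet it only in automorphism-free reduced points. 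Establishing this boundary regularity uniformly is precisely the delicate step, and is the reason the statement is phrased as a conjecture rather than a theorem: a complete proof would require a transversality result for unramified maps into families of FM degenerations that is not furnished by the foundational results proved above.
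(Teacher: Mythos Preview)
The paper does not prove this statement; it is stated as a conjecture and left open. What the paper does offer is an \emph{indirect} route: it observes that Conjecture~\ref{conj2} (unramified Gromov--Witten invariants equal Pandharipande's BPS numbers $n_{g,\beta}(\gamma_1,\ldots,\gamma_n)$ in the locally Fano case) together with Zinger's theorem (integrality of those BPS numbers for locally Fano $\beta$) would imply Conjecture~\ref{conj1}. Your proposal is a genuinely different, direct geometric attack via generic transversality and automorphism-freeness, and you are right to flag it as incomplete; but the paper's own approach is not the one you sketch.

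There is also a concrete gap in your heuristic beyond the boundary-transversality issue you already identify. Your claim that, for generic $X$ (or $J$) and generic incidence, contributing unramified maps have trivial automorphisms is not justified and is in fact the heart of the difficulty. An unramified map $f:C\to X$ from a smooth curve to a threefold is an immersion, but it can still factor through an \'etale cover of its image curve; for instance, a degree-$d$ \'etale cover of an embedded curve $E\subset X$ has the deck group as automorphisms and cannot be removed by perturbing $X$ or $J$. The locally Fano hypothesis $\int_\beta c_1(T_X)>0$ does not by itself exclude such covers, nor does the Stability Condition (which only rigidifies the target automorphisms, not automorphisms of the source over a fixed target). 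Handling these multiple-cover contributions is exactly what the comparison with BPS counts in Conjecture~\ref{conj2} is designed to do, and is why the paper records the integrality as a conjecture rather than attempting the direct transversality argument you outline.
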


Let us call a curve class \emph{locally Fano} if it has positive intersection
number with the anticanonical class.
Generalizing the BPS counts from string theory,
Pandharipande defined numbers
$n_{g,\beta}(\gamma_1,\ldots,\gamma_n)$ which for locally Fano $\beta$ are
given by
\[
\sum_{g\geq 0}
n_{g,\beta}(\gamma_1,\ldots,\gamma_n) \lambda^{2g-2}
\left(\frac{\sin(\lambda/2)}{\lambda/2}\right)^{2g-2+\int_\beta c_1(T_X)}
=\sum_{g\geq 0}
\langle \gamma_1,\ldots,\gamma_n \rangle_{g,\beta} \lambda^{2g-2}
\]
where the $\langle \gamma_1,\ldots,\gamma_n \rangle_{g,\beta}$ are usual
Gromov-Witten invariants and $\lambda$ is a formal variable (see \cite{Pandharipande1, Pandharipande2}).
He conjectured that the $n_{g,\beta}(\gamma_1,\ldots,\gamma_n)$ are
integers, provided that the $\gamma_i$ are integral cohomology classes.
This was proved in the locally Fano case by Zinger \cite{Zinger}.

According to Pandharipande,
the unramified Gromov-Witten invariants in locally Fano curve classes
of cohomology classes on $X$
should yield BPS counts directly.
Notice that by Zinger's result, this statement would imply Conjecture \ref{conj1}.

\begin{Conj}
\label{conj2}
If $\beta$ is a locally Fano curve class on a nonsingular projective
threefold $X$, then
for any cohomology classes $\gamma_1$, $\ldots$, $\gamma_n$ on $X$,
\[ \int _{[\overline{\mathfrak{U}}_{g,n}(X,\beta
)]^{\mathrm{vir}}} \prod e_i^* (\gamma _i) =
n_{g,\beta}(\gamma_1,\ldots,\gamma_n).\]
\end{Conj}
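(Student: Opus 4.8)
The plan is to derive Conjecture~\ref{conj2} from the Gopakumar--Vafa transform that defines $n_{g,\beta}$, via a deformation argument together with a comparison of virtual classes. First I would use the deformation invariance of ramified Gromov--Witten invariants (recorded just after Proposition~\ref{defo}) to replace $X$ by a generic threefold in its deformation class. For a locally Fano curve class $\beta$ on a generic $X$ the expectation is that the only curves in class $\beta$ carried by stable ramified maps are, away from the Fulton--MacPherson degeneration boundary, embeddings of smooth super-rigid curves; the very point of $\overline{\mathfrak{U}}_{g,n}(X,\beta)$ --- designed to forbid ramification away from the markings, to enforce finiteness, and to resolve collisions through FM degenerations --- is that it should parametrize embedded curves rather than arbitrary maps, so that $\int_{[\overline{\mathfrak{U}}_{g,n}(X,\beta)]^{\mathrm{vir}}}\prod e_i^*(\gamma_i)$ is essentially the virtual number of genus-$g$ embedded curves in class $\beta$ incident to the cycles dual to $\gamma_1,\ldots,\gamma_n$, which is precisely what $n_{g,\beta}(\gamma_1,\ldots,\gamma_n)$ is defined to be.

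To make this quantitative, the key step is to prove the generating-function identity
\[
\sum_{g\ge 0}\Bigl(\,\int_{[\overline{\mathfrak{U}}_{g,n}(X,\beta)]^{\mathrm{vir}}}\prod e_i^*(\gamma_i)\Bigr)\lambda^{2g-2}
\left(\frac{\sin(\lambda/2)}{\lambda/2}\right)^{2g-2+\int_\beta c_1(T_X)}
=\sum_{g\ge 0}\langle\gamma_1,\ldots,\gamma_n\rangle_{g,\beta}\,\lambda^{2g-2},
\]
after which the defining equation of $n_{g,\beta}$ forces the asserted equality term by term. I would attack this identity through the natural morphism $c:\overline{\mathfrak{U}}_{g,n}(X,\beta)\to\overline{\mathcal{M}}_{g,n}(X,\beta)$ obtained by composing a stable ramified map with $\pi_{\mathcal{W}/X}$ and stabilizing: over the open locus where the underlying map is an embedding $c$ is an isomorphism, and the discrepancy between $c_*[\overline{\mathfrak{U}}_{g,n}(X,\beta)]^{\mathrm{vir}}$ and $[\overline{\mathcal{M}}_{g,n}(X,\beta)]^{\mathrm{vir}}$ is supported on the locus where the domain acquires rational components mapped into FM screens. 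One then localizes, or applies the degeneration formula, on the universal family $X[n]^+\to X[n]$: the fibers of $\overline{\mathfrak{U}}$ over the degeneration boundary are genus-zero maps into the screens $\PP^r$ and its blowups, and, summed over $g$, these contributions should assemble --- as in the local Gromov--Witten computation of a super-rigid curve --- into the kernel $\bigl(\sin(\lambda/2)/(\lambda/2)\bigr)^{2g-2+\int_\beta c_1(T_X)}$ that relates the ramified series on the left to the ordinary Gromov--Witten series on the right.

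The main obstacle is the control of these boundary (multiple-cover) contributions: one must show that the tangent-line evaluation map and the admissibility constraint of Definition~\ref{DefPmap} genuinely implement the passage from the space of maps to a space of embedded curves, removing exactly the excess that ordinary Gromov--Witten theory packages into the $\sin/\lambda$ transform --- a ramified analogue of the super-rigid-curve computation that has not been carried out. It would also require a family version of the obstruction theory of Proposition~\ref{defo}, robust enough to propagate $[\overline{\mathfrak{U}}_{g,n}(X,\beta)]^{\mathrm{vir}}$ across a deformation of $X$ to a generic member, using the properness established in Theorem~\ref{MainThm}. Even the weaker integrality assertion Conjecture~\ref{conj1} is not known; and since, by Zinger's theorem \cite{Zinger}, the numbers $n_{g,\beta}$ in the locally Fano range are already determined --- and shown to be integers --- by ordinary Gromov--Witten invariants through the transform above, the whole content of a proof of Conjecture~\ref{conj2} is the displayed identity, a comparison of two virtual classes that remains open.
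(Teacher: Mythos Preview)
The paper does not prove this statement: it is explicitly labeled a \emph{Conjecture}, attributed to Pandharipande, and left open. There is therefore no ``paper's own proof'' to compare against. Your proposal is not a proof either, and to your credit you say so in the final paragraph: the displayed generating-function identity is precisely the content of the conjecture, and you have only sketched a heuristic for why it might hold.

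A few concrete points on the sketch itself. First, the deformation-to-generic-$X$ step is not obviously available: deformation invariance of the ramified invariants (which the paper asserts) lets you move within a fixed deformation class, but there is no reason a given projective threefold admits a deformation for which curves in class $\beta$ become super-rigid embedded curves; this is already false for many Fano threefolds. Second, the claim that the boundary contributions over FM screens ``assemble into'' the kernel $\bigl(\sin(\lambda/2)/(\lambda/2)\bigr)^{2g-2+\int_\beta c_1(T_X)}$ is exactly the hard computation; the analogy with the super-rigid local calculation is suggestive but the combinatorics of FM degenerations (trees of blown-up projective spaces, with admissibility at the nodes) is substantially more intricate than the single-curve local model. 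Third, the comparison morphism $c:\overline{\mathfrak{U}}_{g,n}(X,\beta)\to\overline{\mathcal{M}}_{g,n}(X,\beta)$ is not in general compatible with the two obstruction theories in a way that would let you push forward one virtual class and compare it to the other; establishing such a compatibility (or a cosection/localization argument replacing it) is a genuine missing ingredient, not a routine verification. In short, your outline identifies the right shape of the problem, but each of the three steps you flag as ``the main obstacle'' is in fact open, and the paper makes no claim to have resolved any of them.
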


\end{document}